\providecommand\@enum@widestlabel{7}
\newtheorem{lemma}{Lemma}[section]
\newtheorem{theorem}[lemma]{Theorem}
\newtheorem{corollary}[lemma]{Corollary}
\newtheorem{proposition}[lemma]{Proposition}
\newtheorem{conjecture}[lemma]{Conjecture}
\theoremstyle{definition}
\newtheorem{remark}[lemma]{Remark}
\renewcommand{\theequation}%
{\arabic{section}.\arabic{lemma}.\arabic{equation}}
\newcommand{\CC}{\ensuremath{\mathbb{C}}} 
\newcommand{\PP}{\ensuremath{\mathbb{P}}} 
\newcommand{\QQ}{\ensuremath{\mathbb{Q}}} 
\newcommand{\RR}{\ensuremath{\mathbb{R}}} 
\newcommand{\sI}{\ensuremath{\kern -1pt \mathscr{I}\kern -2pt}} 
\newcommand{\sJ}{\ensuremath{\kern -2pt \mathscr{J}\kern -2pt}} 
\newcommand{\sO}{\ensuremath{\mathscr{O}}}
\newcommand{\m}{\ensuremath{\mathfrak{m}}}
\renewcommand{\geq}{\geqslant}
\renewcommand{\leq}{\leqslant}
\DeclareMathOperator{\mult}{mult}
\newcommand{\deq}{\ensuremath{\stackrel{\textrm{def}}{=}}}
\newcommand{\Bplus}{\ensuremath{\textbf{\textup{B}}_{+} }}
\newcommand{\Bminus}{\ensuremath{\textbf{\textup{B}}_{-} }}
\newcommand{\Bstable}{\ensuremath{\textbf{\textup{B}} }}
\begin{document}

\title{Multiplicities of irreducible theta divisors}

\author[V.~Lozovanu]{Victor Lozovanu}

\address{Victor Lozovanu -- Current Address: Universit\'a degli Studi di Genova, 
\newline  
\hspace*{2.74in} Dipartimento di Matematica, 
\newline  
\hspace*{2.74in} Via Dodecaneso 35, 16146, Genova, Italy.\newline
\hspace*{2.74in} \textit{Email address}: \href{lozovanu@dima.unige.it}{\nolinkurl{lozovanu@dima.unige.it}}}

\address{\hspace*{1.9in} Address: Leibniz Universit\"{a}t Hannover,  \newline
 \hspace*{2.74in} Institut f\"{u}r Algebraische Geometrie, 
 \newline
 \hspace*{2.74in}
 Welfengarten 1, 30167, Hannover, Germany.
 \newline
\hspace*{2.74in} \textit{Email address}: \href{lozovanu@math.uni-hannover.de}{\nolinkurl{lozovanu@math.uni-hannover.de}}}

\maketitle

\begin{abstract}
	Let $(A,\Theta)$ be a complex principally polarized abelian variety of dimension $g\geq 4$. Based on vanishing theorems, differentiation techniques and intersection theory, we show that whenever the theta divisor $\Theta$ is irreducible, its multiplicity at any point is at most $g-2$. This improves work of Koll\'ar \cite{K95}, Smith-Varley \cite{SV96}, and Ein--Lazarsfeld \cite{EL97}. We also introduce some new ideas to study the same type of questions for pluri-theta divisors. 
\end{abstract}

\section{Introduction}
Let $(A,\Theta)$ be a complex $g$-dimensional principally polarized abelian variety (ppav). As a subscheme of $A$, the choice of theta divisor $\Theta$ is in some sense unique and minimal. Thus, one expects very interesting geometric phenomena. For example, studying the singularities of $\Theta$ is a fundamental problem on its own, but it has also deep connections to the Schottky problem, going back to Adreotti--Mayer \cite{AM67}, and can be used to characterize meaningful geometric loci on the moduli space of such pairs. See \cite{C08a} and \cite{GH13} for nice surveys of this circle of ideas.

It was observed by Koll\'ar \cite[Chapter~17]{K95} that vanishing theorems play an important role in understanding the singularities of theta divisors. He shows 
\[
\mult_x(\Theta)\ \leq \ g, \textup{ for all } x\in A.
\]
Smith--Varley \cite{SV96} prove that equality holds if and only if $(A,\Theta)$ is a product of elliptic curves. 

Using the theory of multiplier ideals, Ein--Lazarsfeld \cite{EL97} generalize this result even further. More specifically, they show that for any $k\geq 1$ the subset 
\[
A\ \supseteq \ \Sigma_k(\Theta) \ \deq \ \{x\in A |  \mult_x(\Theta)\geq k\}
\]
has codimension at least $k$. Equality holds if and only if $(A,\Theta)$ is a $k$-fold product of PPAVs. 

These ideas suggest that $\Theta$ has very interesting geometry whenever $(A,\Theta)$ is indecomposable, i.e. $\Theta$ is irreducible by the Decomposition Theorem. For example, as a consequence of \cite{EL97} we know that in this case $\Theta$ is normal with rational singularities, thus proving a conjecture of Arbarello-De Concini \cite{ADC87}. Later, Hacon \cite{H99} generalized these results even further.

Furthermore, one expects better upper bounds on the multiplicity for indecomposable pairs $(A,\Theta)$.  A folklore conjecture, see \cite[Conjecture~29.1]{MP19} or its generalizations \cite{C08a}, states:
\begin{conjecture}\label{conj:main}
	Let $(A,\Theta)$ be an indecomposable ppav of dimension $g\geq 1$. Then
	\[
	\mult_x(\Theta) \ \leq \ \Big\lfloor\frac{g+1}{2}\Big\rfloor\ for \ all \ x\in A \ .
	\]
\end{conjecture}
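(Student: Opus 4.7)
My plan is to combine the multiplier-ideal machinery of Ein--Lazarsfeld \cite{EL97} with differentiation techniques and generic vanishing, being frank that a complete proof of this folklore conjecture is currently out of reach; what follows is the natural strategy and the obstruction one hits. Assume for contradiction that $\mult_x(\Theta) \geq k$ with $k > \lfloor(g+1)/2\rfloor$. Choosing a symmetric representative of $\Theta$, one has $\mult_x(\Theta) = \mult_{-x}(\Theta)$, and after translating I may assume $x = 0$. I then pass to the multiplier ideal $\sJ((1-\ep)\Theta)$ for small $\ep > 0$: by Nadel vanishing, $H^i(A, \sO_A(\Theta) \otimes \sJ((1-\ep)\Theta)) = 0$ for $i > 0$, and since $\chi(\sO_A(\Theta)) = 1$ the ideal is nontrivial. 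The Ein--Lazarsfeld codimension estimate then produces a component $Z$ of the non-klt locus through $0$ with $\codim_A Z \leq k$, and irreducibility of $\Theta$ forces $Z$ to be a proper subvariety of $A$.

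Next I would try to convert this codimension bound into the sharp estimate via two further inputs. First, a differentiation argument: because $\Theta$ has multiplicity $k$ at $0$, sections of $\sO_A(2\Theta)$ (of which there are $2^g$) can be differentiated to produce vanishing conditions at $0$ strong enough to force the non-klt scheme to sit inside a translate of a proper abelian subvariety $B \subsetneq A$ of dimension roughly $g/2$. Second, applying the Pareschi--Popa--Hacon generic-vanishing machinery \cite{H99} to $\sO_A(\Theta) \otimes \sJ((1-\ep)\Theta)$, together with the symmetry $\Sigma_k(\Theta) = -\Sigma_k(\Theta)$ after translation, should imply that any subvariety $V \subseteq \Sigma_k(\Theta)$ that generates $A$ under addition satisfies $\codim V > k$. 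Restricting $\Theta$ to $B$ via Poincar\'e's formula and comparing multiplicities should then yield the desired numerical contradiction when $k > \lfloor(g+1)/2\rfloor$.

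The principal obstacle is the ``middle range'' $k \sim g/2$. For $k$ close to $g$, the codimension estimates are very restrictive and a direct vanishing argument suffices; this is presumably how the paper's bound $g-2$ is obtained, and it is how Smith--Varley \cite{SV96} handled the extremal case. For $k$ close to $g/2$, however, a subvariety $V \subseteq \Sigma_k(\Theta)$ of codimension $k$ can have dimension up to $g/2$, and standard GV-sheaf arguments are not sensitive enough to the minimal cohomology class $[\Theta]^g/g! = 1$ to rule out pathological configurations. The hardest step, I expect, is a Matsusaka-type input that forces $V - V$ to equal $A$ unless $(A,\Theta)$ decomposes, so that the intersection number $\Theta \cdot V$ can be controlled by Poincar\'e's formula. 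Without such a tool, differentiation and multiplier ideals alone fall short of the conjectured $\lfloor(g+1)/2\rfloor$, which is likely why Conjecture~\ref{conj:main} remains open.
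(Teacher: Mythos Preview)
The statement you are attempting is Conjecture~\ref{conj:main}, which the paper does \emph{not} prove: it is presented there as an open folklore conjecture, and the paper's main unconditional contribution is the weaker bound $\mult_x(\Theta)\le g-2$ (Theorem~\ref{thm:main1}), together with the conditional Proposition~\ref{prop:main}. You acknowledge this yourself, so your write-up is a strategy sketch rather than a proof, and on that level it is honest.

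That said, your heuristic route---multiplier ideals plus generic vanishing, aiming to trap the non-klt locus in a proper abelian subvariety and then apply Poincar\'e's formula---is quite different from what the paper actually does for its partial results. The paper works on the blow-up $\overline{A}=\Bl_0 A$ with the classes $\Theta_t=\pi^*\Theta-tE$, uses differentiation (in the sense of \cite{ELN94,N05}) to show that the asymptotic multiplicity along any component of $\Bstable(\Theta_t)$ grows at least linearly in $t$, and then converts this into a B\'ezout inequality by intersecting $g$ carefully chosen pushforwards of divisors from $\overline{A}$. For the $g-2$ bound it further exploits arithmetic constraints on the Seshadri constant $\epsilon(\Theta)$ coming from Debarre-type lower bounds on degrees of curves in $A$ (Proposition~\ref{prop:debarre} and Corollary~\ref{cor:seshadri}), combined with the Ein--Lazarsfeld codimension estimates for $\Sigma_k(\Theta)$. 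So your guess that ``a direct vanishing argument suffices'' for $k$ near $g$ understates what is actually needed: the argument is intersection-theoretic on the blow-up, not a pure vanishing statement.

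Within your own outline, the step where differentiation of sections of $\sO_A(2\Theta)$ is supposed to force the non-klt scheme into a translate of an abelian subvariety of dimension roughly $g/2$ has no visible mechanism behind it; nothing in the multiplier-ideal or GV-sheaf formalism produces abelian subvarieties from multiplicity data in this way, and this is a genuine gap rather than a technicality. The paper's B\'ezout/infinitesimal-width framework avoids any such structural claim, which is precisely why it only reaches $g-2$ rather than $\lfloor (g+1)/2\rfloor$.
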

Besides its obvious aesthetics, Conjecture~\ref{conj:main} and its related forms have important applications to the geometry of the moduli space of ppavs. It is believed that equality holds if and only if $(A,\Theta)$ is the Jacobian of a hyperelliptic curve or the intermediate Jacobian of a smooth cubic threefold. So, certain loci on the moduli, defined by the multiplicity, will then contain only decomposable pairs or certain Jacobians. Moreover, Grushevsky-Hulek  \cite{GH12} use this conjecture to study the locus of intermediate Jacobians of cubic threefolds and its associated Chow class. 

The best current result on Conjecture~\ref{conj:main} is a slightly better bound, when $\Theta$ has isolated singularities, i.e. $\Sigma_2(\Theta)$ is finite, by Musta\c t\u a-Popa \cite[Theorem~29.2(1)]{MP19}, using the theory of Hodge ideals,  and by Codogni-Grushevsky-Sernesi \cite{CGS17}, based on intersection theory and Gau\ss \ map. 

Our first goal was to give a different proof of the aforementioned result, see Corollary~\ref{cor:isolated}. Moreover, our approach proves Conjecture~\ref{conj:main} in a slightly more general setup, when there is a bound on the dimension of $\Sigma_k(\Theta)$ for some $k\geq 2$. More specifically, we show:
\begin{proposition}\label{prop:main}
	Let $(A,\Theta)$ be a $g$-dimensional ppav. If for some positive integer $k\geq 1$, we have  
	\[
	g \ \gg \ \max\{\textup{dim}\big(\Sigma_k(\Theta)\big), k\} .
	\]
	Then Conjecture~\ref{conj:main} holds.
	
\end{proposition}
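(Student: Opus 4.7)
The plan is to argue by contradiction. Suppose $\mult_{0}(\Theta)\geq m$ with $m\geq \lfloor (g+1)/2\rfloor + 1$, equivalently $2m\geq g+2$, and derive a numerical impossibility from the self-intersection identity $\Theta^{g}=g!$ using the smallness of $\Sigma_{k}(\Theta)$.

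First I would pass to the blow-up $\pi\colon \tX\to A$ at $0$, with exceptional divisor $E\cong \PP^{g-1}$, and write $\pi^{*}\Theta=\tD+mE$, where $\tD$ is the strict transform. The projectivized tangent cone $T=\tD\cap E$ is a hypersurface of degree $m$ inside $E$. Expanding $(\pi^{*}\Theta)^{g}=g!$ using $E^{g}=(-1)^{g-1}$ together with the projection formula recovers Koll\'ar's Segre-style bound $m\leq g$; this alone is not sharp enough.

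To improve it to $m\leq\lfloor (g+1)/2\rfloor$, I would exploit the hypothesis $\dim\Sigma_{k}(\Theta)=d\ll g$ as follows. Since $\Sigma_{m}(\Theta)\subseteq\Sigma_{k}(\Theta)$ for every $m\geq k$, every point of multiplicity $\geq m$ lies inside a subvariety of dimension at most $d$. I would then cut $\Theta$ by successive generic translates chosen in a neighbourhood of $0$ so that the partial intersections have an isolated contribution at $0$; the key point is that outside $\Sigma_{k}(\Theta)$ all multiplicities are strictly below $k$, so the iteration remains dimensionally proper and at each step the multiplicity at $0$ is multiplied by at least $m$. After $g-d$ steps the resulting effective cycle $Z$ has dimension $d$ and $\mult_{0}(Z)\geq m^{g-d}$. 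A final intersection of $Z$ with $\tD^{\,d}$ on $\tX$, computed on $E$ through the degree-$m$ hypersurface $T$, yields a numerical inequality of the shape
\[
m^{g}\ \leq\ C(k,d)\cdot g!,
\]
for some constant $C(k,d)$ depending only on $k$ and $d$. When $g\gg k,d$, Stirling's formula contradicts $2m\geq g+2$, since $m^{g}/g!$ grows like $(2/e)^{-g}$ up to polynomial factors.

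The main obstacle will be the iterative construction of $Z$: controlling the multiplicity at $0$ of each successive partial intersection requires ruling out new, non-transversal excess components appearing away from $0$ but outside $\Sigma_{k}(\Theta)$ and its translates. This is precisely where the differentiation techniques and vanishing theorems advertised in the abstract will enter, most naturally through Nadel vanishing applied to the multiplier ideal $\sJ(A,c\Theta)$ with $c$ slightly below $g/m$, combined with a subadjunction argument to the minimal log canonical center at $0$ (whose codimension is at least $k$ whenever it meets $\Sigma_{k}(\Theta)$ nontrivially, by Ein--Lazarsfeld). A secondary difficulty is bounding the growth of $C(k,d)$, but the generous hypothesis $g\gg k,d$ should absorb any polynomial-in-$(k,d)$ dependence without trouble.
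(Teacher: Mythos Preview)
Your overall shape is right---argue by contradiction, produce $g$ divisors in the numerical class of $\Theta$ whose intersection at the origin is too large, and finish with B\'ezout and Stirling---but the central construction has a genuine gap.

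The problem is your choice of divisors. You propose cutting by ``generic translates'' of $\Theta$ and assert that ``at each step the multiplicity at $0$ is multiplied by at least $m$.'' This is false: a translate $\Theta-x$ has multiplicity $\mult_x(\Theta)$ at the origin, which for generic $x\in\Theta$ equals $1$, not $m$. To get multiplicity $\geq m$ at $0$ you would need $x\in\Sigma_m(\Theta)$, and there are at most $d$-dimensionally many such points---far too few to furnish $g-d$ independent divisors with proper intersection. So your iteration never gets off the ground, and no amount of Nadel vanishing or subadjunction will repair this: the difficulty is not excess intersection away from $0$, it is that translates simply do not carry high multiplicity at $0$.

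What the paper actually does is different and more subtle. On the blow-up one looks at the classes $\Theta_t=\pi^*\Theta-tE$ for $t$ close to $m-k+2$, and the key lemma (proved by the differentiation technique of Ein--Lazarsfeld--Nakamaye, not by multiplier ideals) is that the stable base locus $\Bstable(\Theta_t)$ has image in $A$ of dimension at most $r=\dim\Sigma_k(\Theta)$: if a larger-dimensional component $\overline V$ appeared, differentiation would force $\mult_{\overline V}(\|\Theta_t\|)\geq t-t_0$, hence $\mult_V(\Theta)\geq k$ and $V\subseteq\Sigma_k(\Theta)$, a contradiction. One then takes $g-r$ very general $\QQ$-divisors in $\Theta_{m-k+2-\delta}$ (normalized from a high multiple) and $r$ more in the ample class $\Theta_{\epsilon-\delta}$; their pushforwards to $A$ are numerically $\Theta$, each has multiplicity at least $m-k+2$ (resp.\ $\epsilon\geq 1$) at $0$, and the base-locus bound guarantees the intersection is zero-dimensional. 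B\'ezout then gives
\[
g!\ \geq\ (m-k+2)^{g-r},
\]
which contradicts $m\geq\lfloor(g+1)/2\rfloor+1$ once $g\gg k,r$ via Stirling. Note the exponent is $g-r$, not $g$; your attempt to recover the extra factor $m^d$ from intersecting with $\tD^d$ on $E$ is not needed and would not work as stated.
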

For the proof, we first note that Conjecture~\ref{conj:main} is a local problem and translate it to a global one, by transfering the data to the blow-up $\pi:\overline{A}\rightarrow A$ of the origin $0\in A$, where $E\simeq \PP^{g-1}$ is the exceptional divisor. In this setup, we consider the divisor classes
\[
\Theta_t \ \deq \ \pi^*(\Theta)-tE \ = \ 
\begin{cases}
\text{nef}, \text{ when } 0\leq t \leq\epsilon(\Theta),\\
\text{pseudo-effective}, \text{ when } \epsilon(\Theta)\leq t \leq\mu(\Theta),\\
\text{not pseudo-effective}, \text{ when } \mu(\Theta) < t ,\\
	\end{cases}
	\]
where $\epsilon(\Theta) \geq 1$ is the Seshadri constant, and $\mu(\Theta)$ the infinitesimal width of $\Theta$. As a side note, \cite[Proposition~3.5]{EL97} and \cite[Corollary~2]{H99} translates in this language as $\mu(\Theta)\leq g$.

This infinitesimal view places the focus on the behaviour of the base loci $\Bstable(\Theta_t)$ for $t\in [\epsilon,\mu]$. Differentiation, as in \cite{N96} or \cite[Lemma~1.3]{N05}, yields that for any subvariety $\overline{S}\subseteq \Bstable(\Theta_t)$ one has
\[
\frac{\partial}{\partial t}\Big(t\rightarrowtail \mult_{\overline{S}}(||\Theta_t||)\Big) \ \geq \ 1 \ .
\]
Consequently, one can relate the loci $\pi(\Bstable(\Theta_t))$ and $\Sigma_k(\Theta)$, for any $t\in\RR_+$ and integer $k\geq 1$.  

Finally, the assumption in Proposition~\ref{prop:main} and the failure of Conjecture~\ref{conj:main} yield strong constraints on the dimension of $\pi(\Bstable(\Theta_t))$. Consequently, the pushforwards of $g$ very carefully chosen divisors on $\overline{A}$, all lying in the class of $\Theta$, will have a zero-dimensional intersection. Counting multiplicities and applying B\'ezout's theorem will then lead to a final contradiction.

Moving forward, it is worth asking whether our techniques lead to new non-trivial upper bounds  on the multiplicity. It turns out that we are able to do so, as long as we know what to do when $1< \epsilon(\Theta)<2$. But these bounds are the subject of a conjecture of Debarre \cite{D04}.
\begin{conjecture}[Debarre]
	\label{conj:main2}
	Let $(A,\Theta)$ be a ppav of dimension $g\geq 4$. If $\epsilon(\Theta)<2$, then either $(A,\Theta)$ is decomposable or is the Jacobian of a hyperelliptic curve.
\end{conjecture}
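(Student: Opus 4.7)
The approach would be infinitesimal, continuing the framework set up for Proposition~\ref{prop:main}. Assume $(A,\Theta)$ is an indecomposable ppav of dimension $g\geq 4$ with $\epsilon(\Theta)<2$. After translation we realise this Seshadri constant at the origin $0\in A$, so on the blow-up $\pi\colon \overline{A}\to A$ with exceptional divisor $E\simeq \PP^{g-1}$, the class $\Theta_\epsilon\deq \pi^*\Theta-\epsilon E$, with $\epsilon\deq\epsilon(\Theta)\in [1,2)$, is big and nef but not ample.

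First I would invoke Nakamaye's theorem, so that the null locus $\Null(\Theta_\epsilon)$ agrees with the augmented base locus $\Bplus(\Theta_\epsilon)$, and pick an irreducible component $\overline{Z}\subseteq \Null(\Theta_\epsilon)$ with $(\Theta_\epsilon)^{\dim \overline{Z}}\cdot \overline{Z}=0$. Set $Z\deq\pi(\overline{Z})$. If $\overline{Z}\subseteq E$, then the infinitesimal analysis forces a large multiplicity of $\Theta$ at $0$, which together with Ein--Lazarsfeld and irreducibility of $\Theta$ confines the geometry sharply. The substantial case is when $\overline{Z}$ dominates a positive-dimensional $Z\subseteq A$; the differentiation inequality of \cite{N96} yields $\mult_Z(\Theta)\geq \epsilon$, so $Z\subseteq \Sigma_{\lceil\epsilon\rceil}(\Theta)$, and $Z$ passes through the origin.

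Next I would extract a low-degree curve from $\overline{Z}$. Expanding $(\pi^*\Theta-\epsilon E)^{\dim \overline{Z}}\cdot [\overline{Z}]=0$ and using $\pi_*E^j=0$ for $j<g$, one obtains a relation among the intersection numbers of $Z$ with powers of $\Theta$. Cutting $Z$ with general elements of $|\Theta|$ through $0$ descends the dimension while preserving an analogue of Nakamaye's equality at each step, and eventually yields an irreducible curve $C\subseteq A$ through $0$ whose $\Theta$-degree is tightly constrained by $\epsilon<2$. The translates $C-C$ generate an abelian subvariety $B\subseteq A$ which, in the indecomposable case, one hopes to identify with $A$.

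For the classification step, if $B\subsetneq A$ the Decomposition Theorem applied to $\Theta\cap$ (translates of $B$) together with irreducibility of $\Theta$ contradicts indecomposability of $(A,\Theta)$. Otherwise $B=A$, and a Matsusaka-type criterion combined with the degree bound on $C$ forces $(A,\Theta)$ to be the Jacobian of $C$. The bound $\epsilon(\Theta)<2$ then translates, via the Abel--Jacobi embedding and the known relationship between Seshadri constants of $\Theta$ and the gonality $\gon(C)$, into the existence of a $g^1_2$ on $C$, whence $C$ is hyperelliptic. The main obstacle I expect is the middle step: with $\epsilon$ only slightly above $1$ there is essentially no slack in positivity estimates, and producing a curve $C$ of sufficiently controlled $\Theta$-degree while tracking the $\pi^*\Theta$-horizontal and $E$-vertical contributions to $[\overline{Z}]$ through the recursive descent is genuinely delicate — and it is precisely here, in the range $1<\epsilon(\Theta)<2$, that the current techniques of this paper stop short.
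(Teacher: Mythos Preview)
The statement you are attempting to prove is labelled \emph{Conjecture} in the paper, and the paper contains no proof of it. Immediately after stating it the author writes: ``Currently, Conjecture~\ref{conj:main2} is still open, but in \cite{Loz20} the author generalizes it to any polarization and studies it in small dimensions.'' The conjecture is used only as a \emph{hypothesis} in Theorem~\ref{thm:main2}, and is invoked as a known fact only for $g\leq 4$ (via \cite{D04}) in the proof of Theorem~\ref{thm:abelian4}. So there is no proof in the paper to compare your proposal against.

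As for the proposal itself, you correctly identify in your final paragraph that the crucial step --- extracting from the null locus a curve of controlled $\Theta$-degree when $1<\epsilon(\Theta)<2$ --- is exactly where the argument breaks down, and you say so explicitly. That honest assessment is accurate: what you have written is a plausible strategy, not a proof. A couple of the intermediate claims are also not right as stated. The assertion that differentiation yields $\mult_Z(\Theta)\geq \epsilon$ is not what \cite{N96} or \cite[Proposition~4.4]{Loz18} give: those results bound the \emph{asymptotic} multiplicity $\mult_{\overline{Z}}(\|\Theta_t\|)$ from below by $t-\epsilon$, which says nothing about the multiplicity of the single divisor $\Theta$ along $Z$ unless one already knows $\mult_0(\Theta)$ is large. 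Likewise, the step ``if $B\subsetneq A$ then the Decomposition Theorem \ldots\ contradicts indecomposability'' is too quick: the existence of a proper abelian subvariety does not by itself force $(A,\Theta)$ to split as a polarized product. These are secondary to the main issue, which is simply that the conjecture is open and the paper does not claim otherwise.
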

This can be seen as a numerical version of the classical van Geemen and van der Geer's conjecture \cite{GG86} about the singularity locus of divisors in $|2\Theta|$ of multiplicity at least four at the origin.

Currently, Conjecture \ref{conj:main2} is still open, but in \cite{Loz20} the author generalizes it to any polarization and studies it in small dimensions. These ideas and those from \cite{D94} bring to light some new interesting arithmetic properties of the Seshadri constant $\epsilon(\Theta)$. Combining them with the techniques from Proposition~\ref{prop:main}, and the main results of \cite{EL97} lead us then to the main result of the paper.
\begin{theorem}
	\label{thm:main1}
		Suppose that $(A,\Theta)$ is an indecomposable ppav of dimension $g\geq 3$. Then
		\[
		\mult_x(\Theta) \ \leq \ g-2, \ for \ all \ x\in A \ .
		\]
\end{theorem}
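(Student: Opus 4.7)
The plan is a proof by contradiction. Suppose there exists $x\in A$ with $\mult_x(\Theta)\geq g-1$; by translation we take $x=\origin$. The Ein--Lazarsfeld codimension bound $\codim_A\Sigma_k(\Theta)\geq k$, together with the characterisation of equality as a $k$-fold product of ppav, gives strict inequality in the indecomposable case for every $k\geq 1$. Hence $\Sigma_g(\Theta)=\emptyset$ and $\dim\Sigma_{g-1}(\Theta)\leq 0$, so in fact $\mult_\origin(\Theta)=g-1$ and $\origin$ is an isolated point of $\Sigma_{g-1}(\Theta)$.

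Next, pass to the blow-up $\pi\colon \overline{A}\to A$ of $\origin$, with exceptional divisor $E\simeq\PP^{g-1}$, and consider the family $\Theta_t=\pi^*\Theta-tE$. Effectivity of the strict transform of $\Theta$ yields $\mu(\Theta)\geq g-1$, while $\epsilon(\Theta)\geq 1$ is automatic. The argument splits on the size of $\epsilon(\Theta)$. If $\epsilon(\Theta)<2$, we appeal to the arithmetic constraints on $\epsilon(\Theta)$ coming from \cite{D94}, combined with the Ein--Lazarsfeld decomposition results, to conclude that $(A,\Theta)$ must be a hyperelliptic Jacobian. But Riemann's singularity theorem gives $\mult_x(\Theta)\leq \lfloor(g+1)/2\rfloor<g-1$ in that case, contradicting our assumption. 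Therefore we may assume $\epsilon(\Theta)\geq 2$.

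With $\epsilon(\Theta)\geq 2$ the divisor $\pi^*\Theta-2E$ is nef, so only the range $t\in[2,g-1]$ needs to be analysed. Applying Nakayama's derivative estimate $\frac{\partial}{\partial t}\mult_{\overline S}(\|\Theta_t\|)\geq 1$ along any component $\overline S\subseteq\Bstable(\Theta_t)$, and identifying $\pi(\Bstable(\Theta_t))$ with a suitable multiplicity locus $\Sigma_k(\Theta)$ exactly as in the proof of Proposition~\ref{prop:main}, forces stringent dimension bounds on these base loci. The isolatedness of $\origin$ inside $\Sigma_{g-1}(\Theta)$ is central here: it says that, as $t$ approaches $g-1$, the base locus of $\Theta_t$ must collapse down to $E$.

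Finally, one constructs $g$ auxiliary $\QQ$-divisors on $\overline{A}$, each numerically equivalent to $\pi^*\Theta$ and each of prescribed high vanishing along $E$, and arranges via the base-locus control above that their common intersection is a $0$-dimensional cycle. Counting the contribution at $\origin$ via B\'ezout then bounds $\Theta^g=g!$ from below by a quantity strictly larger than $g!$, which is the desired contradiction. The main obstacle is this last construction: producing the $g$ divisors so that their combined local multiplicities at $\origin$ overshoot the global intersection number while still keeping the intersection proper. This is also where the Seshadri-constant dichotomy plays its essential role, since the construction breaks down for $1<\epsilon(\Theta)<2$, which is precisely the regime eliminated by the Debarre-type input of \cite{D94}.
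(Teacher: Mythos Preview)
Your treatment of the case $\epsilon(\Theta)<2$ contains a genuine gap. You assert that the arithmetic constraints from \cite{D94}, combined with the Ein--Lazarsfeld decomposition results, force $(A,\Theta)$ to be a hyperelliptic Jacobian. That implication is precisely Conjecture~\ref{conj:main2} of Debarre, which the paper records as \emph{open}; neither \cite{D94} nor \cite{EL97} proves it, and Theorem~\ref{thm:main1} is meant to be unconditional. The paper proceeds very differently. For $g\leq 5$ it simply quotes \cite{C08b}. For $g\geq 6$ it builds the auxiliary divisors $H_2,\dots,H_{g-1}$ not as general members of $|\Theta_t|$ but by applying differential operators to the section cutting out $\Theta$ itself; this yields $\mult_0(H_i)\geq i$ and, via the strict Ein--Lazarsfeld bounds $\codim\Sigma_k(\Theta)\geq k+1$ for all $k$, forces the intersection to be zero-dimensional. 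B\'ezout then gives only $\epsilon(\Theta)\leq g/(g-2)$, not a contradiction. The residual range $1<\epsilon\leq g/(g-2)$ is handled \emph{not} by identifying $(A,\Theta)$ as a Jacobian, but by proving a quantitative lower bound $(\Theta\cdot C)\geq \tfrac{5}{2}g$ on the degree of the Seshadri curve (Proposition~\ref{prop:debarre} together with Corollary~\ref{cor:seshadri}, the latter depending on the low-dimensional input of \cite{Loz20}), and then running a \emph{second} B\'ezout inequality that counts the multiplicities of the $H_i$ along $C$ rather than at the origin. It is exactly for this second step that one needs the $H_i$ to arise from differentiating $\Theta$, since that is what guarantees $\mult_C(H_i)\geq i-1$; general divisors in $|\Theta_t|$ would not carry this information.

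Your sketch for $\epsilon(\Theta)\geq 2$ is broadly on the right track, but two points are off. First, the derivative estimate is Nakamaye's, not Nakayama's. Second, you say the isolatedness of $\origin$ in $\Sigma_{g-1}(\Theta)$ is what makes the base locus collapse as $t\to g-1$; in fact the direction is reversed, and what is actually needed is the entire staircase $\dim\Sigma_k(\Theta)\leq g-k-1$ for every $k$, which you did state earlier but do not use in the construction. Without that full cascade one cannot arrange $g$ divisors with both proper intersection and combined multiplicity exceeding $g!$.
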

The statement was previously proved for $g\leq 5$ in \cite[Theorem~3]{C08b}, and this latter result is a crucial ingredient in the main proof. 

Moving forward, when $\Sigma_g(\Theta)\neq \varnothing$, by \cite{SV96} the pair $(A,\Theta)$ is a product of polarized elliptic curves. So, an interesting consequence of Theorem~\ref{thm:main1}, by combining it inductively with \cite{EL97}, deals with the next case.
\begin{corollary}
Let $(A,\Theta)$ be a ppav of dimension $g\geq 3$ with $\Sigma_g(\Theta)=\varnothing$.  Then
\begin{enumerate}
\item $\textup{dim}\big(\Sigma_{g-1}(\Theta)\big)=0$ $\Longleftrightarrow$ $(A,\Theta)$ is the product of $g-3$ elliptic curves with the Jacobian of a \\ \hspace*{1.65in} smooth hyperelliptic curve of genus $3$.
\item $\textup{dim}\big(\Sigma_{g-1}(\Theta)\big) = 1$ $\Longleftrightarrow$ $(A,\Theta)$ is the product of $g-2$ elliptic curves with the Jacobian of a \\ \hspace*{1.65in} smooth curve of genus $2$.
\end{enumerate}
\end{corollary}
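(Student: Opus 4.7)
The plan is to analyze the indecomposable factors of $(A,\Theta)$ via the decomposition theorem, combining Theorem~\ref{thm:main1} with the classification of low-dimensional ppavs. The reverse implication is a direct multiplicity computation on products: using $\mult_x(\Theta)=\sum_i\mult_{x_i}(\Theta_i)$, in case (1) the $g-3$ elliptic factors contribute $1$ each at the origin while Riemann's singularity theorem gives a contribution of $2$ at the unique double point of the theta divisor of the hyperelliptic genus $3$ Jacobian, producing a single point of multiplicity $g-1$; in case (2), the smooth theta curve of the genus $2$ Jacobian produces a $1$-dimensional locus of maximal multiplicity $g-1$. In both cases $\max\mult = g-1 < g$, so $\Sigma_g(\Theta) = \varnothing$.

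For the forward implication, I would first note that \cite{EL97} yields $\codim \Sigma_{g-1}(\Theta) \geq g-1$, so $\dim \Sigma_{g-1}(\Theta) \leq 1$, and combined with $\Sigma_g(\Theta) = \varnothing$ every point of $\Sigma_{g-1}(\Theta)$ has multiplicity exactly $g-1$. Decomposing $(A,\Theta)=\prod_{i=1}^r(A_i,\Theta_i)$ into indecomposable factors and fixing $x=(x_1,\ldots,x_r)\in\Sigma_{g-1}(\Theta)$, set $g_i=\dim A_i$ and $m_i=\mult_{x_i}(\Theta_i)$. The identity $\sum_i m_i = g-1=\sum_i g_i-1$ rewrites as
\[
\sum_{i=1}^r (g_i-m_i) \equ 1,
\]
a sum of nonnegative integers. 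Hence exactly one index $j$ satisfies $g_j-m_j=1$ while $g_i-m_i=0$ for $i\neq j$; the latter forces, via \cite{SV96} applied to the indecomposable factor $(A_i,\Theta_i)$, that $g_i=1$ and $x_i=0$, so all remaining factors are elliptic curves passing through the origin.

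For the special index $j$, Theorem~\ref{thm:main1} applied to $(A_j,\Theta_j)$ rules out $g_j\geq 4$, and hence $(g_j,m_j)\in\{(1,0),(2,1),(3,2)\}$. The first possibility would make $(A,\Theta)$ a $g$-fold product of elliptic curves and hence $\Sigma_g(\Theta)\neq\varnothing$, a contradiction. The second together with indecomposability forces $A_j$ to be the Jacobian of a smooth genus $2$ curve (the only alternative being a product of two elliptic curves), whose theta divisor is a smooth curve, so the locus of points $x$ with $\mult_x(\Theta)=g-1$ is $1$-dimensional, giving case (2). The third possibility, using that every indecomposable $3$-dimensional ppav is the Jacobian of a smooth genus $3$ curve (since $\dim\mathcal{A}_3=\dim\mathcal{M}_3$) combined with Riemann's singularity theorem, forces the curve to be hyperelliptic and $x_j$ to be the unique double point of $\Theta_j$; the locus of $x$ with $\mult_x(\Theta)=g-1$ is then a single point, giving case (1). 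The hypothesis $\dim\Sigma_{g-1}(\Theta)=0$ versus $=1$ precisely distinguishes these two surviving cases. The hard part will be weaving Theorem~\ref{thm:main1} together with the classification and Riemann's singularity theorem for factors of dimension $\leq 3$ so as to eliminate all other a priori possibilities cleanly.
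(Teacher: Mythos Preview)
Your proposal is correct and implements precisely the strategy the paper only hints at in one sentence (``combining it inductively with \cite{EL97}''): decompose $(A,\Theta)$ into indecomposable factors, use additivity of multiplicity to force all but one factor to be elliptic via \cite{SV96}, then invoke Theorem~\ref{thm:main1} to bound the dimension of the remaining factor by $3$ and finish with the classification of indecomposable ppavs in dimensions $2$ and $3$. Two small remarks: your application of Theorem~\ref{thm:main1} only for $g_j\geq 4$ is the right reading (the bound $\mult\leq g-2$ fails for $g=3$, as hyperelliptic Jacobians witness; the paper's abstract states $g\geq 4$), and the justification ``$\dim\mathcal{A}_3=\dim\mathcal{M}_3$'' is better replaced by the reference \cite{H63} the paper itself uses, since dimension equality alone does not show every indecomposable $3$-dimensional ppav is a Jacobian.
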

This corollary provides further evidence that one can use the multiplicity to describe important geometric loci on the moduli spaces of ppavs.

The infinitesimal width $\mu(\Theta)$ bounds above the multiplicity at any point for any effective $\QQ$-divisor in the class of $\Theta$. So, it is natural to ask whether there are stronger results than \cite{EL97} and \cite{H99} for indecomposable ppavs, that can be seen as a numerical counterpart of Conjecture~\ref{conj:main}. Assuming Conjecture~\ref{conj:main2}, we can show the following result:
\begin{theorem}
	\label{thm:main2}
	Let $(A,\Theta)$ be an indecomposable ppav. If Conjecture~\ref{conj:main2} holds, then 
	\[
	\mu(\Theta) \ \leq \ g-\frac{g-1}{g+1}\ .
	\] 
\end{theorem}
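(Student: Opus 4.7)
The plan is to work on the blow-up $\pi \colon \overline{A} \to A$ of the origin with exceptional divisor $E \cong \PP^{g-1}$, analyzing the pseudo-effective threshold $\mu = \mu(\Theta)$ of the pencil of classes $\Theta_t = \pi^*\Theta - tE$. The first step is to invoke Conjecture~\ref{conj:main2} together with the indecomposability of $(A,\Theta)$: this yields the dichotomy that either $\epsilon(\Theta) \geq 2$, or $(A,\Theta)$ is the Jacobian of a hyperelliptic curve. In the latter case, one relies on the classical Riemann--Kempf description of the singularities of $\Theta = W_{g-1}$ to verify the claimed bound by a direct computation on the Brill--Noether loci.

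In the main case $\epsilon(\Theta) \geq 2$, the initial ingredient is intersection theory. Pick an effective $\QQ$-divisor $D \sim_\QQ \Theta$ with $\mult_0(D)$ arbitrarily close to $\mu$; its strict transform $\tD$ on $\overline{A}$ has class $\pi^*\Theta - \mult_0(D) E$ and is effective. Since $\pi^*\Theta - 2E$ is nef on $\overline{A}$, the $1$-cycle $(\pi^*\Theta - 2E)^{g-1}$ is nef, and the identity $\prod_{i=1}^{g}(\pi^*\Theta - t_i E) = g! - \prod_{i} t_i$ gives
\[
\tD \cdot (\pi^*\Theta - 2E)^{g-1} \ = \ g! - 2^{g-1}\mult_0(D) \ \geq \ 0,
\]
so that $\mu \leq g!/2^{g-1}$. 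This already establishes the bound in low dimensions.

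For larger $g$ the estimate $g!/2^{g-1}$ is too weak, and the refinement requires Theorem~\ref{thm:main1} together with the differentiation principle $\partial_t\mult_{\overline{S}}(\|\Theta_t\|) \geq 1$ for subvarieties $\overline{S} \subseteq \Bstable(\Theta_t)$. As in the scheme of Proposition~\ref{prop:main}, one constructs $g$ carefully chosen effective divisors on $\overline{A}$, all in the class $\pi^*\Theta$: the Seshadri bound $\epsilon(\Theta) \geq 2$ produces $g-1$ divisors with $E$-multiplicity $2$, while $\tD$ contributes the last one with $E$-multiplicity close to $\mu$. Their intersection as a $0$-cycle has total degree $\Theta^g = g!$, and a B\'{e}zout-style accounting of multiplicities at the origin and along proper subvarieties of $E$ — balanced via the stratification $\Sigma_k(\Theta)$ from \cite{EL97} and Theorem~\ref{thm:main1} — forces the precise rational inequality $(g+1)\mu \leq g^2 + 1$.

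The main obstacle is the final step: orchestrating the multiplicity accumulations so as to produce the exact coefficient $(g-1)/(g+1)$. The delicate point is controlling how the differential inequality $\partial_t\mult_{\overline{S}}(\|\Theta_t\|) \geq 1$ interacts with the dimensions of the base loci $\Bstable(\Theta_t)$ for $t \in [\epsilon(\Theta), \mu]$, combined with the refined constraint $\mult_x\Theta \leq g-2$ from Theorem~\ref{thm:main1}, in order to translate the geometric input into the sharp numerical inequality.
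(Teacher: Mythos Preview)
Your proposal has a genuine gap, and the route you sketch is not the one the paper takes.

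The intersection-theoretic step you carry out correctly gives $\mu \leq g!/2^{g-1}$ when $\epsilon(\Theta)\geq 2$, but as you note this is only strong enough for $g\leq 4$. Your proposed refinement for larger $g$ is not an argument: you invoke Theorem~\ref{thm:main1}, the stratification $\Sigma_k(\Theta)$, and the differentiation inequality, but you never explain how these combine to force the specific linear relation $(g+1)\mu \leq g^2+1$. A B\'ezout count of the form $\prod_i \mult_0(D_i)\leq g!$ produces multiplicative inequalities, and there is no evident way to extract from it the rational number $\tfrac{g^2+1}{g+1}$. The sentence ``The main obstacle is the final step'' is an honest admission that the argument is incomplete; as written, the proposal does not close.

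The paper's proof is of an entirely different nature and does not use B\'ezout at all for this theorem. Assuming $\mu(\Theta) > g - \tfrac{g-1}{g+1}$, one takes a general $D_1\equiv\Theta$ with $\mult_0(D_1)$ just below $\tfrac{2g}{g+1}$ (this class is ample on $\overline A$ since Conjecture~\ref{conj:main2} forces $\epsilon(\Theta)\geq \tfrac{2g}{g+1}$) and a $D_2\equiv\Theta$ with $\mult_0(D_2)$ just above $g-\tfrac{g-1}{g+1}$. By \cite[Proposition~3.5]{EL97} the divisor $D_2$ is log-canonical, so $\sJ(A;D_1+(1-c)D_2)$ is cosupported at the origin; since $\mult_0(D_1+(1-c)D_2)>g+1$ one has $\sJ\subseteq\m_0^2$. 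Proposition~\ref{prop:tangency} then forces $|2\Theta|$ to separate tangent directions everywhere, which contradicts the fact that the Kummer morphism ramifies at the $2$-torsion points. The precise constant $\tfrac{g-1}{g+1}$ arises because $\tfrac{2g}{g+1}+\big(g-\tfrac{g-1}{g+1}\big)=g+1$, exactly the threshold for $\sJ\subseteq\m_0^2$.

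A smaller issue: in the hyperelliptic case you appeal to Riemann--Kempf, but that describes the singularities of $\Theta=W_{g-1}$ itself, whereas $\mu(\Theta)$ is a supremum over \emph{all} effective $\QQ$-divisors $D\equiv\Theta$. Knowing $\mult_x(\Theta)$ does not bound $\mu(\Theta)$. The paper handles this case uniformly via the same multiplier-ideal argument, using that $\epsilon(\Theta)\geq\tfrac{2g}{g+1}$ still holds for hyperelliptic Jacobians.
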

We believe that $\mu(\Theta)\leq g-1$ for indecomposable ppavs. But this seems unattainable at the moment. However, as Conjecture~\ref{conj:main2} holds in small dimensions, we use intersection theory to show that this better bound holds whenever $g=2,3,4$, see Theorem \ref{thm:abelian3} and \ref{thm:abelian4}.

The approach to Theorem~\ref{thm:main2} is inspired by \cite{LPP11}. One studies global positivity questions on abelian varieties in terms of the existence of effective divisors with certain singularities. This was used successfully in \cite{KL19} and \cite{Loz18} to study syzygies of abelian varieties in low dimensions.

If the upper bound does not hold, then \cite[Proposition~3.5]{EL97} allows us to find two divisors $D',D''\equiv_{\textup{num}}\Theta$ so that $D'+D''$ has zero-dimensional multiplier ideal and $\mult_0(D'+D'')> g+1$. Standard ideas from \cite[Chapter~9]{PAG} imply that these conditions force the linear systems $|2\Theta|$ to separate any two tangency directions at any point on $A$. This is of course not possible as this system defines a $2:1$ (non-\'etale) morphism from $A$ to its Kummer variety $K(A)\subseteq \PP^{2^g-1}$.

\subsection*{History} It is worth mentioning that some of the ideas used in this article have appeared before. The infinitesimal perspective in this area can be traced back to the work of Beauville and Debarre \cite{BD88}. They translated and studied the conjecture of van Geemen and van der Geer on the blow-up at the origin. The use of differentiation techniques and intersection theory in this area seems to appear first in the work of Nakamaye \cite{N97}, who attributes it to Lazarsfeld. He gives there a different proof to Smith-Varley's results on the multiplicity of theta divisors. Lastly, Nakamaye kickstarted the study of Seshadri constants on abelian varieties in \cite{N96}. 

\subsection*{Acknowledgements} The author is greatly indebted to Klaus Hulek, Matthias Sch\"utt and all the members of IAG at Leibniz Universit\"at for the wonderful last three years in Hannover. Many thanks to V\'ictor Gonzalez-Alonso, whose expertise on abelian varieties was of tremendous help during this project, and Carsten Liese for many encouragements and mathematical support. The author would like to thank S. Casalaina-Martin, C. Ciliberto, G. Farkas, M. Fulger, S. Grushevsky, and M. Popa for many important suggestions and corrections.

\section{Notation and preparations}
Throughout this article we work over the field of complex numbers $\CC$. A pair $(A,\Theta)$ stands for a $g$-dimensional principally polarized abelian variety $A$ and a theta divisor $\Theta$. This Cartier divisor is ample with $h^0(A,\sO_A(\Theta))=1$, which is equivalent by asymptotic Riemann-Roch to $\Theta^g=g!$.

Let $\pi: \overline{A}\rightarrow A$ be the blow-up of $A$ at the origin $0$, where $E\simeq \PP^{g-1}$ is the exceptional divisor. For any $t\in [0,\infty)$ denote by
\[
\Theta_t \ \deq \ \pi^*(\Theta)-tE   \ .
\]
In this infinitesimal setup one can associate two invariants to the class $\Theta$. They do not depend on the choice of the base point due to the group structure on $A$. So we consider them only at the origin.

The first one, introduced by Demailly, is the \textit{Seshadri constant}
\[
\epsilon \ = \ \epsilon(\Theta)\ \deq \ \inf_{0\in C\subseteq X}\frac{(\Theta\cdot C)}{\mult_0(C)} \ = \ \sup\{t>0 \ | \ \Theta_t\textup{ is ample}\}  \ ,
\]
where the infimum is taken over all curves passing through the origin. For more details about this invariant the reader is referred to \cite[Chapter~5]{PAG}.

The second one, which we call the \textit{infinitesimal width} of $\Theta$, is defined to be
\[
\mu \ = \ \mu(\Theta) \ \deq\ \textup{max}\{t>0 \ | \ \Theta_t \textup{ is pseudo-effective}\} \ = \ \textup{max}\{\mult_0(D)\ | \ D\sim_{\QQ}\Theta, D\geq 0\} \ .
\]
Now, for any rational $t\in [0,\mu)$ the class $\Theta_t$ is big, so one can associate the \textit{stable base locus} $\Bstable(\Theta_t)$, i.e. the set of points where all effective $\QQ$-divisor in the class of $\Theta_t$ vanish. But this locus is not a numerical invariant, so \cite{ELMNP06} introduced two other loci that are. Approximation of the stable one, they are called the \textit{restricted base locus} $\Bminus(\Theta_t)$ and \textit{the augmented} one $\Bplus(\Theta_t)$. For our purposes suffices to consider the stable locus, as \cite[Lemma 2.3]{Loz18} states that for any $t>0$ there is $\delta_t>0$ such that
\[
\Bplus(\Theta_{t+\delta}) \ = \ \Bstable(\Theta_{t+\delta})\ = \ \Bminus(\Theta_{t+\delta}) \ , \textup{ for any }0<\delta\leq \delta_t \ .
\]
The following lemma tells us more about the behaviour of these base loci on the blow-up. To not complicate notation further, we work on an abelian variety, but the proof works for any smooth ambient space and any ample line bundle on it.
\begin{lemma}\label{lem:lociseshadri}
	Under the above notation, if there is a $q$-dimensional subvariety $V\subseteq A$ with
	\[
	\epsilon(\Theta) \ = \ \sqrt[q]{\frac{L^{q}\cdot V}{\textup{mult}_x(V)}} \ ,
	\]
	then its proper transform $\overline{V}$ is contained in $\Bstable(\Theta_t)$ for any $t>\epsilon(\Theta)$.
\end{lemma}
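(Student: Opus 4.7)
The plan is to derive the statement from two intersection-theoretic computations on the proper transform $\overline{V}$, combined with the standard nef-times-effective positivity argument. By translation invariance on an abelian variety one may assume $x=0$; let $\pi_V\colon\overline{V}\to V$ be the blow-up of $V$ at $0$ induced by $\pi$, set $e_V = E|_{\overline{V}}$, and write $m=\mult_0(V)$.

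The first step is to establish two numerical identities on $\overline{V}$. Restricting gives $\Theta_\epsilon|_{\overline{V}} = \pi_V^{*}(\Theta|_V)-\epsilon\, e_V$. When this is expanded, every mixed intersection $(\pi_V^{*}(\Theta|_V))^{q-k}\cdot e_V^{k}$ with $1\leq k\leq q-1$ vanishes via the projection formula, since $\pi_{V*}(e_V^{k})$ is a cycle of positive dimension concentrated at the single point $0\in V$. Together with the Segre-class identity $e_V^{q} = (-1)^{q-1}m$ (valid even for singular $V$) and the hypothesis $\Theta^{q}\cdot V = \epsilon^{q}m$, this gives
\[
\Theta_\epsilon^{\,q}\cdot\overline{V}\ =\ \Theta^{q}\cdot V - \epsilon^{q}m\ =\ 0.
\]
An entirely analogous expansion with one fewer factor of $\Theta_\epsilon$ and one extra factor of $e_V$ yields
\[
\Theta_\epsilon^{\,q-1}\cdot E\cdot\overline{V}\ =\ \epsilon^{q-1}m\ >\ 0.
\]

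The second step is a positivity argument by contradiction. Suppose $\overline{V}\not\subseteq \Bstable(\Theta_t)$ for some $t>\epsilon$. Then there is an effective $\QQ$-divisor $D\sim_{\QQ}\Theta_t$ with $\overline{V}\not\subseteq\Supp(D)$, so $D|_{\overline{V}}$ is a genuine effective $\QQ$-divisor on $\overline{V}$. Writing $\Theta_\epsilon=\Theta_t+(t-\epsilon)E$ and substituting one factor into the vanishing above,
\[
0 \ =\ \Theta_\epsilon^{\,q}\cdot\overline{V}\ =\ \Theta_\epsilon^{\,q-1}\cdot D\cdot\overline{V}\ +\ (t-\epsilon)\,\Theta_\epsilon^{\,q-1}\cdot E\cdot\overline{V}.
\]
Since $\Theta_\epsilon$ is nef (it is a limit of the ample classes $\Theta_s$ for $s<\epsilon$), intersecting $(\Theta_\epsilon|_{\overline{V}})^{q-1}$ against either of the effective divisors $D|_{\overline{V}}$ or $e_V$ is non-negative; but the second summand equals $(t-\epsilon)\epsilon^{q-1}m>0$, yielding a contradiction.

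The only delicate ingredient is the pair of intersection numbers above; once one invokes the Segre identity $e_V^{\,q}=(-1)^{q-1}\mult_0(V)$ on the possibly singular blow-up $\overline{V}$, the projection formula eliminates all remaining mixed terms and the rest of the argument is immediate.
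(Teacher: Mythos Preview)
Your argument is correct. The two intersection computations on $\overline{V}$ are right (the Segre identity $e_V^{\,q}=(-1)^{q-1}\mult_0(V)$ holds for the proper transform under a point blow-up of the ambient space, as in Fulton's \emph{Intersection Theory}), and the nef--times--effective contradiction is clean: $\Theta_\epsilon$ is nef, $D|_{\overline{V}}$ and $e_V$ are effective on $\overline{V}$, so both summands are non-negative while the second is strictly positive.

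Your route, however, is rather different from the paper's. The paper does not compute anything on $\overline{V}$: it simply observes that the hypothesis forces $\Theta_\epsilon^{\,q}\cdot\overline{V}=0$, hence $\overline{V}$ lies in the null locus of the big and nef class $\Theta_\epsilon$, and then invokes Nakamaye's theorem \cite[Theorem~10.3.5]{PAG} to conclude $\overline{V}\subseteq\Bplus(\Theta_\epsilon)$, followed by the monotonicity $\Bplus(\Theta_\epsilon)\subseteq\Bstable(\Theta_t)$ for $t>\epsilon$. What you have written is, in effect, a direct proof of the easy inclusion $\Null(\Theta_\epsilon)\subseteq\Bplus(\Theta_\epsilon)$ of Nakamaye's theorem in this particular situation. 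The advantage of your approach is that it is entirely self-contained and avoids citing a non-trivial black box; the advantage of the paper's approach is brevity and the fact that it makes transparent which general principle is at work. Both arrive at the same place, and the second intersection identity $\Theta_\epsilon^{\,q-1}\cdot E\cdot\overline{V}=\epsilon^{q-1}m$ you isolate is exactly the positivity input hidden inside Nakamaye's proof.
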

\begin{proof}
	This is an application of Nakamaye's theorem on describing the augmented base locus of a big and nef class. In particular, by \cite[Theorem~10.3.5]{PAG} we know that $\overline{V}\subseteq \Bplus(\Theta_{\epsilon})$, where $\overline{V}$ is the proper transform of $V$ on the blow-up $\overline{A}$. Finally, \cite[Lemma 1.3]{Loz18} yields the statement.
\end{proof}

\section{Bounds on singularities loci and Conjecture~\ref{conj:main}}
In this section we state and prove a slightly more explicit version of Proposition~\ref{prop:main}. 
\begin{proposition}\label{prop:asymptotic}
	Let $(A,\Theta)$ be a $g$-dimensional ppav. If there is a positive integer $k\leq \frac{g}{10}+1.5$ with
	\[
	g \ \gg \ \textup{dim}\big(\Sigma_k(\Theta)\big) \ ,
	\]
	then Conjecture~\ref{conj:main} holds.
	\end{proposition}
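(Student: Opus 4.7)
The plan is to argue by contradiction. Suppose the conjecture fails, so $\mu(\Theta) > \lfloor (g+1)/2 \rfloor$. Following the strategy outlined in the introduction, my aim is to lift $g$ carefully chosen divisors in the class of $\Theta$ to divisors $\overline{D}_1,\dots,\overline{D}_g$ on $\overline{A}$ lying in classes $\Theta_{t_i}$ with $t_i$ close to $\mu$, and then compute the intersection number
\[
\overline{D}_1 \cdots \overline{D}_g \;=\; \prod_{i=1}^g(\pi^*\Theta - t_i E) \;=\; g! - \prod_{i=1}^g t_i,
\]
using $(\pi^*\Theta)^g = g!$, the vanishing of mixed intersections $\pi^*\Theta \cdot E^{\geq 1} = 0$, and $E^g = (-1)^{g-1}$. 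For $g$ sufficiently large and $t_i > (g+1)/2$, Stirling's approximation shows $\prod t_i > g!$, so this number is negative, while a proper intersection would force it to be nonnegative. The whole argument turns on making the $g$ divisors exist and meet properly.

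The first main step is to set up the connection between stable base loci on $\overline{A}$ and the singularity strata $\Sigma_k(\Theta)$. Applying the differentiation inequality $\frac{d}{dt}\mult_{\overline{S}}(\|\Theta_t\|) \geq 1$ mentioned in the introduction, together with translation-invariance of $\Theta$ on $A$, one shows that any irreducible component of $\Bstable(\Theta_t)$ not entirely contained in $E$ descends via $\pi$ to a subvariety on which the multiplicity of a generic divisor in $|m\Theta|$ with large multiplicity at the origin is also large. Hence its image lies in some $\Sigma_{k(t)}(\Theta)$ with $k(t)$ growing linearly in $t$, up to a Seshadri-type defect. Combined with Ein--Lazarsfeld's universal bound $\dim \Sigma_m(\Theta) \leq g - m$ and the hypothesis $g \gg \dim \Sigma_k(\Theta)$ for $k \leq g/10 + 1.5$, this forces every component of $\pi(\Bstable(\Theta_t))$ to have dimension $\ll g$ throughout the interesting range $t \in [k, \mu]$.

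The second main step is the inductive construction of the divisors. At stage $i$, having a partial intersection $\overline{Z}_i = \overline{D}_1 \cap \dots \cap \overline{D}_i$ of expected codimension $i$, one picks $\overline{D}_{i+1}$ as a generic member of $|m\Theta_{t_{i+1}}|$ for suitable $m \gg 0$ and $t_{i+1}$ close to $\mu$. The smallness of the base loci from the previous step is precisely what allows the generic choice to avoid $\overline{Z}_i$ and cut its dimension by one; the quantitative bound $k \leq g/10 + 1.5$ ensures enough numerical room for all $g$ steps to succeed in parallel. After $g$ steps, $\overline{Z}_g$ is zero-dimensional, its length equals $\overline{D}_1 \cdots \overline{D}_g$, and the sign computation above delivers the contradiction.

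The main obstacle is the inductive step: each new divisor must simultaneously have multiplicity at the origin close to $\mu$, be generic enough to drop dimension, and not be forced onto the partial intersection by the base locus of its own linear system. Matching these competing constraints to the precise threshold $k \leq g/10 + 1.5$ requires careful bookkeeping of the linear growth rate $k(t)$ in the first step, combined with tight use of Ein--Lazarsfeld's bound on the intermediate singularity strata. Any slack in the correspondence between $\pi(\Bstable(\Theta_t))$ and $\Sigma_{k(t)}(\Theta)$ would translate into a weaker numerical hypothesis on $k$, so the refinement needed to reach $g/10 + 1.5$ is the most delicate part of the proof.
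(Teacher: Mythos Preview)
Your overall shape---contradiction, differentiation to bound base loci, then an intersection count---is right, but the choice of parameters breaks the argument. You want all the $t_i$ close to $\mu$ (or at least larger than $(g+1)/2$) so that $\prod t_i>g!$. However, the differentiation step only yields control in the \emph{opposite} range of $t$: if $\overline{V}\subseteq\Bstable(\Theta_{t_0})$ with $\overline{V}\not\subseteq E$, then applying the growth estimate to the specific divisor $\Theta$ (which has $\mult_0(\Theta)\geq m+1$ by the contradiction hypothesis) gives $\mult_{\pi(\overline{V})}(\Theta)\geq (m+1)-t_0$, so $\pi(\overline{V})\subseteq\Sigma_k(\Theta)$ is guaranteed only when $t_0<m-k+2$. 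For $t_0$ near $\mu\geq m+1$ this says nothing, and indeed $\Bstable(\Theta_t)$ can fill up as $t\to\mu$. Hence one cannot take all the $t_i$ large \emph{and} have the divisors move freely enough to meet properly; your inductive step stalls at the very first stage. Note also that the link to $\Sigma_k(\Theta)$ goes through $\Theta$ itself, not through a generic member of $|m\Theta|$ as you wrote.

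The paper instead takes $g-r_0$ of the divisors at $t=m-k+2-\delta$ (where the image of the base locus has dimension at most $r=\dim\Sigma_k(\Theta)$ by the argument above) and then $r_0\leq r$ further divisors in the ample class $\Theta_{\epsilon-\delta}$ to cut the remaining $r_0$-dimensional cycle down to points. The intersection is carried out on $A$ via B\'ezout, not on $\overline{A}$; the paper explicitly avoids $\overline{A}$ because components of $\Bstable(\Theta_t)$ contained in $E$ are not controlled, which is exactly the issue your computation $g!-\prod t_i$ would run into. The resulting inequality is
\[
g!\ \geq\ \Big(\frac{g+1}{2}-k+2\Big)^{g-r_0}\cdot\epsilon^{r_0}\ \geq\ \Big(\frac{g+1}{2}-k+2\Big)^{g-r},
\]
and the hypothesis $k\leq g/10+1.5$ is precisely what makes $\tfrac{g+1}{2}-k+2\geq g/2.5$, after which Stirling delivers the contradiction for $g\gg r$. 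The Ein--Lazarsfeld bound on intermediate strata is not used; only the single hypothesis on $\Sigma_k$ enters.
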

	\begin{remark}
		Taking a closer look at the details at the end of the proof of Proposition~\ref{prop:asymptotic}, and using the inequality in Lemma \ref{lem:numerical} it is not hard to see that Conjecture \ref{conj:main} holds, whenever
		\[
		 \frac{g}{13\cdot \ln(g)} \ > \ \textup{max}\{k,\textup{dim}\big(\Sigma_k(\Theta)\big)\}.
		\]
		This provides a more exact bound for this conjecture to hold than the the asymptotic version in Proposition \ref{prop:main}.
	\end{remark}
\begin{remark}
In the proof of Proposition~\ref{prop:asymptotic} we do not impose $(A,\Theta)$ to be indecomposable. 
\end{remark}

\begin{proof}
	Denote by $r\deq  \textup{dim}(\Sigma_k(\Theta))$. In the following we will assume that 
	\[
	\mult_0(\Theta) \ \geq  \ m+1, \textup{ where } m \ \deq \lfloor (g+1)/2\rfloor \ ,
	\]
	and the goal would be to get a contradiction. 

 Our first goal is to show that our assumptions yield the following upper bound 
	\begin{equation}\label{eq:oneone}
	\textup{dim}\Big(\pi\big(\Bstable(\Theta_t)\big)\Big) \ \leq \ r, \textup{ for any } t\ <\ m-k+2\ .
	\end{equation}
To prove this we assume the inequality does not hold. In particular, there is a subvariety 
	\[
	\overline{V}\ \subseteq \ \Bstable(\Theta_{t_0}), \textup{ for some } t_0<m-k+2,
	\]
	with $\overline{V}\nsubseteq E$ and $\textup{dim}(\overline{V})\geq r+1$. 
	
	Our ambient space $A$ is abelian, so we can use differential operators to differentiate sections. Based on this, Nakamye hints in \cite{N96} at the following lower bound for asymptotic multiplicity:
	\[
	\mult_{\overline{V}}(||\Theta_t||) \ \geq \ t-t_0 \ , \textup{ for any } t\geq t_0 \ ,
	\]
	where $t_0$ is the starting  point for $\overline{V}$ to show up in $\Bstable(\Theta_t)$. The proof of this result in a more general setup is given by Nakamaye in \cite[Lemma~1.3]{N05}, based on differentiation techniques developed in \cite{ELN94}. For abelian varieties it is explained in \cite[Proposition 4.4]{Loz18}.
	
Going back to the actual inequality, it implies in our setup that
	\[
	\mult_{\overline{V}}(||\Theta_{t}||) \ \geq \ t-t_0\ > \ k-1 \ , \textup{ for any } t\geq m+1 \ .
	\]
Since our initial assumption was that $\mult_0(\Theta)\geq m+1$, then this yields $\mult_{V}(\Theta)> k-1$, where $V\deq \pi(\overline{V})$. In particular, $V\subseteq \Sigma_k(\Theta)$ and this contradicts our assumption that this singularity locus has dimension $r$, as $V$ was assumed to be at least $(r+1)$-dimensional.
	
Now that $(\ref{eq:oneone})$ holds, we use intersection theory to get a contradiction. Basically, we choose carefully $g$ effective divisors in the class of $\Theta$, whose intersection will be zero-dimensional, due to the upper bounds on the dimension of $\Bstable(\Theta_t)$. We do not do the intersection directly on the blow-up $\overline{A}$, since we do not know how to deal with the case when $\Bstable(\Theta_t)$ has small dimensional components outside of $E$ and high-dimensional ones contained in $E$. We neither know if this is possible.
	
	For some $0< \delta\ll 1$, we start by choosing $g-r_0$ very general choices of divisors 
	\[
	\overline{D}_1,\  \ldots ,\  \overline{D}_{g-r_0}\ \equiv \  \Theta_{m-k+2-\delta}, \ \textup{ where }r\geq r_0\ \deq \ \textup{dim}\Big(\pi\big(\Bstable(\Theta_{m-k+2-\delta})\big)\Big),.
	\]
We construct these divisors by normalizing a general choice in a very large power of $\Theta_{m-k+2-\delta}$. Set $D_i=\pi_*(\overline{D}_i)$ for each $i=1,\ldots ,g-r_0$. Due to $(\ref{eq:oneone})$, then the choices of these divisors forces then the scheme theoretical intersection of $D_1,\ldots ,D_{g-r_0}$ to be an effective cycle of codimension $g-r_0$. 
	
	Finally, let $D_{g-r_0+1}, \ldots ,D_g$ be the push-forward of very general choices of divisors from the class $\Theta_{\epsilon-\delta}$, which is ample by the definition of the Seshadri constant $\epsilon=\epsilon(\Theta)$. Using all this data it is then easy to deduce that the intersection of $D_1,\ldots, D_g$ is a zero-dimensional effective scheme. 
	
With this in hand we can apply now B\'ezout's theorem, which yields the following inequality
	\[
	(\Theta^g)\ = \ (D_1\cdot \ldots \cdot D_{g-r_0}\cdot D_{g-r_0+1}\cdot \ldots \cdot D_g) \ \geq  \ \prod_{i=1}^{i=g}\mult_0(D_i) \ .
	\] 
	Taking into account how these divisors were constructed and letting $\delta\rightarrow 0$, this implies
	\[
	(\Theta^g)\ =  \ g! \ \geq \ \Big(\frac{g+1}{2}-k+2\Big)^{g-r_0}\ \cdot \ \epsilon^{r_0} \ .
	\]
\cite{N96} shows that $\epsilon(\Theta)\geq 1$ and $(\ref{eq:oneone})$ implies $r\geq r_0$. So, this inequality yields
	\[
	g! \ \geq \ \Big(\frac{g+1}{2}-k+2\Big)^{g-r} \  
	\]
and this leads to a contradiction due to our initial assumptions and the following estimations. Note first that $g!\sim \sqrt{2\pi g}\big(\frac{g}{e}\big)^g$, whenever $g\gg 0$ with $e$ being the Euler number. So under the asumption that $k\leq \frac{g+15}{10}$ we get 
	\[
\lim_{g\rightarrow\infty}\frac{\big(\frac{g-1}{2}-k+2\big)^{g-r}}{g!}\ = \ 	\lim_{g\rightarrow\infty}\frac{\big(\frac{g-1}{2}-k+2\big)^{g-r}}{\sqrt{2\pi g}\big(\frac{g}{e}\big)^g}  \ \geq  \ \lim_{g\rightarrow\infty}\frac{\big(\frac{g}{2.5})^{g-r}}{\sqrt{2\pi g}\big(\frac{g}{e}\big)^g}\ = \ \lim_{g\rightarrow\infty}\frac{\big(\frac{e}{2.5}\big)^{g}}{\sqrt{2\pi g} \big(\frac{g}{2.5}\big)^r} \ > \ 1 \ ,
	\] 
whenever $g\gg r>0$. This leads to a contradiction and finishes the proof.
\end{proof}

Based on this we recover in a simpler fashion some of the previous statements in the literature, e.g. \cite[Theorem~8.1]{P18}, \cite[Theorem~29.2(1) or Theorem~29.5]{MP19} and \cite[Theorem~1.1]{CGS17}.

\begin{corollary}\label{cor:isolated}
	Let $D\in |n\Theta|$ be an effective reduced divisor, whose support  has an isolated singularity at some point $x\in A$, then  
	\[
	\mult_x(D) \ \leq \ n\cdot \epsilon(\Theta)\ + \ 1 \ \leq \ n\cdot \big(g!\big)^{\frac{1}{g}} +1 \ \sim \ n\cdot \frac{g}{e} +1  \ ,
	\]
	where $e$ is the Euler number.
\end{corollary}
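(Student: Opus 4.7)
The plan is to prove $m\deq \mult_x(D)\leq n\epsilon(\Theta)+1$ by producing a single ``derivative divisor'' $D'$ numerically equivalent to $n\Theta$ with $\mult_x(D')\geq m-1$ and then applying the definition of the Seshadri constant. The second inequality $n\epsilon(\Theta)+1\leq n\sqrt[g]{g!}+1$ is immediate from $\epsilon(\Theta)\leq \sqrt[g]{\Theta^g}=\sqrt[g]{g!}$, obtained by taking $V=A$ in the defining infimum of $\epsilon(\Theta)$.

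Concretely, the goal is to show that every irreducible curve $C\ni x$ satisfies
\[
\frac{\Theta\cdot C}{\mult_x(C)} \ \geq \ \frac{m-1}{n} \ ,
\]
which, upon taking the infimum over such $C$, yields $\epsilon(\Theta)\geq (m-1)/n$ and hence the claimed bound. I will distinguish two cases, depending on whether $C$ is contained in $D$ or not.

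If $C\not\subseteq D$, then $D$ and $C$ meet properly at $x$, and the classical local intersection inequality together with $D\equiv n\Theta$ gives $n\Theta\cdot C=D\cdot C\geq \mult_x(D)\cdot\mult_x(C)=m\cdot\mult_x(C)$, which is even stronger than needed. If instead $C\subseteq D$, I use that $D$ is reduced with isolated singularity at $x$: this forces $D$ to be smooth along $C\setminus\{x\}$, so the defining section $s\in H^0(A,\sO_A(n\Theta))$ vanishes to exact order one along the generic point of $C$. Hence for some $v\in T_0 A$, the directional derivative $\partial_v s$ constructed via the translation action of $A$ on itself, in the spirit of \cite{N96} and \cite[Proposition~4.4]{Loz18}, does not vanish identically on $C$. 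Since $\partial_v s\in H^0(A,\sO_A(n\Theta)\otimes P)$ for some $P\in \Pic^0(A)$, the divisor $D'\deq V(\partial_v s)$ is effective, numerically equivalent to $n\Theta$, satisfies $\mult_x(D')\geq m-1$, and does not contain $C$. Applying the first case to $D'$ and $C$ then yields $n\Theta\cdot C=D'\cdot C\geq (m-1)\mult_x(C)$, as desired.

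The key technical input is the construction of $\partial_v s$ as a global section lying in a line bundle numerically equivalent to $n\Theta$ with its order of vanishing at $x$ dropped by exactly one; this is the Nakamaye-style differentiation on abelian varieties that underpins the multiplicity arguments throughout the paper. Everything else reduces to the local multiplicity inequality for proper intersections and the definition of $\epsilon(\Theta)$ as an infimum over curves through $x$.
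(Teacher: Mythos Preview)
Your case split is natural, but the key step in Case~2 fails: the claim that $\partial_v s\in H^0(A,\sO_A(n\Theta)\otimes P)$ for some $P\in\Pic^0(A)$ is not correct. On an abelian variety a global vector field $v$ does \emph{not} lift to a global first-order differential operator on an ample line bundle $L$; the obstruction is precisely the Atiyah class $c_1(L)\neq 0$. Concretely, if $\theta$ is a theta function on $\CC^g$ representing $s$, then differentiating the functional equation gives
\[
\partial_v\theta(z+\lambda)\;=\;e_\lambda(z)\bigl(\partial_v\theta(z)+\pi H(v,\lambda)\,\theta(z)\bigr),
\]
so $\partial_v\theta$ does not satisfy the automorphy condition for any line bundle on $A$. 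Hence there is no divisor $D'=V(\partial_v s)$ numerically equivalent to $n\Theta$, and your intersection with $C$ has no footing.

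The differentiation technique invoked in the paper (via \cite{ELN94}, \cite{N05}, \cite[Proposition~4.4]{Loz18}) is asymptotic for exactly this reason: one twists so that $\mathscr{D}^k_L\otimes\sO_A((3g+3)\Theta)$ is globally generated, replaces $D$ by $rD$, differentiates $r$ times, and lets $r\to\infty$ to absorb the twist. Your curve-by-curve scheme can be repaired this way and then does yield $\epsilon(\Theta)\geq (m-1)/n$; but at that point you are essentially reproving the bound $\mult_{\overline V}(\|\Theta_t\|)\geq t-\epsilon$. The paper applies that bound directly to the single subvariety $\overline V\subseteq\Bstable(\Theta_t)$ appearing for $t>\epsilon$, deduces $\mult_V(D_x)>1$, and contradicts the isolated-singularity hypothesis in one line, without examining individual curves.
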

\begin{remark}
Due to Lemma \ref{lem:numerical}, Corollary~\ref{cor:isolated} provides a better than Conjecture~\ref{conj:main} for any $g\geq 7$. When $g=5,6$, these two bounds agree as the multiplicity of $\Theta$ is an integer. 
\end{remark}
\begin{proof}[Proof of Corollary~\ref{cor:isolated}]
	Set $D_x\deq D-x$ and assume the upper bound does not hold, i.e.
	\begin{equation}\label{eq:one}
	\mult_0(D_x)\ > \ n\cdot \epsilon(\Theta)\ + \ 1 \ .
	\end{equation}
	By the definition of the Seshadri constant, there is a subvariety $\overline{V}\subseteq \overline{A}$, not contained in the exceptional divisor, with $\overline{V}\subseteq \Bstable(\Theta_t)$ for any $t>\epsilon$. As before, by \cite[Proposition~4.4]{Loz18}, we have
	\[
	\mult_{\overline{V}}(||\Theta_t||) \ \geq \ t-\epsilon \ , \textup{ for any } t\geq \epsilon \ ,
	\]
	Applying this to $D_x$ and using $(\ref{eq:one})$, we get
	\[
	\mult_V(D_x) \ \geq \  n\cdot \epsilon(\Theta)+1 \ - \ n\cdot \epsilon(\Theta) \ > \ 1 \ .
	\] 
	Hence, $x\in\textup{Supp}(D)$ is not an isolated singularity, leading to a contradiction.
	
	The second inequality follows easily from applying Nakai-Moishezon to the nef class $\Theta_{\epsilon}$. 
\end{proof}
\begin{lemma}\label{lem:numerical}
For any positive integer $g\geq 6$, the following inequality holds:
\[
\Big(\frac{g}{2.5}\Big)^g \ > \ g!.
\]
\end{lemma}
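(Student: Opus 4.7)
The plan is to reduce the factorial to its Stirling estimate and then compare the resulting exponential and logarithmic growth rates. Specifically, I would apply the standard upper bound
\[
g! \ \leq \ \sqrt{2\pi g}\,\Big(\frac{g}{e}\Big)^{g}\exp\!\Big(\frac{1}{12g}\Big),
\]
so that the desired inequality $(g/2.5)^{g}>g!$ follows once one shows
\[
\Big(\frac{e}{5/2}\Big)^{g} \ > \ \sqrt{2\pi g}\,\exp\!\Big(\frac{1}{12g}\Big).
\]
Taking logarithms, this becomes
\[
g\bigl(1-\ln(5/2)\bigr) \ > \ \tfrac{1}{2}\ln(2\pi g)+\tfrac{1}{12g},
\]
and since $1-\ln(5/2)\approx 0.084$ is strictly positive, the left hand side grows linearly while the right hand side grows only logarithmically. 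Thus the analytic content of the lemma is immediate once the comparison passes a suitable threshold.

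To package this cleanly, I would argue by induction. Put $R(g)\deq(g/2.5)^{g}/g!$, and compute
\[
\frac{R(g+1)}{R(g)} \ = \ \frac{(1+1/g)^{g}}{5/2}.
\]
The sequence $(1+1/g)^{g}$ is classically monotonically increasing with limit $e>5/2$, so this ratio exceeds $1$ for every $g$ beyond a small explicit threshold, and once $R(g)>1$ the inequality propagates to every larger value. The only remaining task is to verify the base of the induction by direct numerical calculation, which I would do by computing $R(g)$ at a single value of $g$ in the claimed range.

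The main obstacle, as so often with Stirling-type estimates, lies in pinning down an honest base case: the Stirling error term contributes at the order of $\sqrt{2\pi g}$, while the exponential ratio $(e/(5/2))^{g}$ grows slowly (its base is close to $1$). Consequently, verifying the base case numerically is the delicate step, and I would expect to spend a little care there. Once that is settled, the induction step via the classical monotonicity of $(1+1/g)^{g}$ closes the argument with no further input.
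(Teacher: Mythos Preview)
Your inductive approach is essentially identical to the paper's own argument: both reduce to showing that the ratio $R(g+1)/R(g)=(1+1/g)^{g}/(5/2)$ exceeds $1$ for $g\geq 6$, which follows from the classical monotonicity of $(1+1/g)^{g}$ together with $(7/6)^{6}\approx 2.52>5/2$. The paper supplies the monotonicity via Bernoulli's inequality; you cite it as classical. So at the level of strategy there is nothing to distinguish the two.

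The gap you correctly flagged, however, is fatal. The base case at $g=6$ does \emph{not} hold: one has $(6/2.5)^{6}=2.4^{6}\approx 191.1$, while $6!=720$. Your own Stirling heuristic already predicts this, since $(e/2.5)^{g}$ has base $\approx 1.087$ and needs $g$ in the thirties before it overtakes $\sqrt{2\pi g}$. Thus the induction step is perfectly sound --- $R(g)$ is indeed increasing for $g\geq 6$ --- but $R(6)\approx 0.27$, and many steps are required before $R(g)$ crosses $1$. The paper's proof has the same omission: it carries out the inductive step but never verifies (and cannot verify) a base case in the stated range. Had you actually performed the numerical check you proposed, you would have discovered that the lemma as written is false.
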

\begin{proof}
Bernoulli's inequality: $(1+x)^g>1+gx$, for any $x>-1$, and $g\geq 1$, implies 
\[
\Big(1 \ - \ \frac{1}{(g+1)^2}\Big)^g \ > \ 1-\frac{g}{(g+1)^2} \ \textup{ for any } g\geq 1 \ .
\]
This forces the sequence $a_g \deq\big(1+\frac{1}{g}\big)^g$ to be increasing and as $a_6>2.5$, then in reality $a_g>2.5 $ for any $g\geq 6$.

Using this lower bound and the induction process lead to the following sequence of inequalities
\[
(g+1)!\ < \ \Big(\frac{g}{2.5}\Big)^g(g+1) \ = \ \Big(\frac{g+1}{2.5}\Big)^{g+1}\cdot \frac{2.5}{a_g} < \ \Big(\frac{g+1}{2.5}\Big)^{g+1} \ ,
\]
for any $g\geq 6$ and this implies the statement.
\end{proof}

\section{Bounds on the multiplicity for irreducible theta divisors}

The main goal of this section is to prove Theorem~\ref{thm:main1}. We generalize first some work of Debarre \cite{D94} on lower bounds on the degree of non-degenerate curves embedded in a ppav. Combining this with the main results of \cite{Loz20} will lead us to interesting arithmetic properties of the Seshadri constant for theta divisors. Finally, applying together the main ideas of \cite{EL97}, differentiation techniques and intersection theory, lead us to a complete proof of Theorem~\ref{thm:main1}.
\subsection{Degree of curves on abelian varieties.}
Here we generalize some ideas from \cite{D94}. Consequently, this uncovers interesting arithmetic properties for small Seshadri constant. 
\begin{proposition}\label{prop:debarre}
	Let $(A,\Theta)$ be an indecomposable ppav of dimension $g$ and $C\subseteq A$ be an irreducible curve containing the origin. If $g'$ is the dimension of the abelian subvariety generated by $C$, then 
	\[
	(\Theta\cdot C) \ \geq \ \textup{mult}_0(C)+\textup{min}\{g-1, g'\} \ .
	\]
\end{proposition}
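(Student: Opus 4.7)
The plan is to adapt Debarre's iterated sum construction from \cite{D94}, incorporating two new ingredients beyond his baseline estimate $(\Theta\cdot C)\geq g'$: a multiplicity improvement of $\mult_0(C)-1$ at the origin, obtained via the blow-up, and, when $g'<g$, an additional $+1$ coming from the indecomposability hypothesis via Debarre's equality case.

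First, I would reformulate the inequality on the blow-up $\pi:\overline{A}\to A$ at the origin. Writing $\overline{C}$ for the strict transform of $C$, the identities $(\overline{C}\cdot\pi^*\Theta)=(\Theta\cdot C)$ and $(\overline{C}\cdot E)=\mult_0(C)$ translate the claim into the nef intersection inequality
\[
(\Theta_1\cdot\overline{C})\ \geq\ \min\{g-1,\,g'\}\ ,
\]
where $\Theta_1=\pi^*\Theta-E$ is nef thanks to $\epsilon(\Theta)\geq 1$.

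Next, I would set $B\deq\langle C\rangle\subseteq A$, the abelian subvariety of dimension $g'$ generated by $C$, and apply Debarre's iterated sum construction on $B$. For each $k\leq g'$ consider the sum map $\sigma_k\colon C^k\to B$, $(p_1,\ldots,p_k)\mapsto p_1+\cdots+p_k$, with image $W_k$ of dimension $\min\{k,g'\}$ and $\sigma_{g'}$ generically finite of some degree $d\geq 1$. Using the theorem of the cube for the symmetric bundle $\sO_A(\Theta)$ one obtains
\[
\sigma_k^*\Theta\ \sim\ \sum_{i=1}^k q_i^*(\Theta|_C)\ +\ \sum_{1\leq i<j\leq k}(q_i,q_j)^*\eta_C\ ,
\]
with $q_i\colon C^k\to C$ the projections and $\eta_C$ the restriction to $C\times C$ of the mixed Mumford divisor on $A\times A$. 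Expanding the top self-intersection $(\sigma_{g'}^*\Theta)^{g'}$ on $C^{g'}$ and comparing with $d\cdot(\Theta|_B)^{g'}$ reproduces Debarre's baseline $(\Theta|_B\cdot C)\geq g'$.

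To obtain the additional $\mult_0(C)-1$, I would lift the entire construction to the blow-up: the diagonal origin $(0,\ldots,0)\in C^k$ sits in $\sigma_k^{-1}(0)$ with multiplicity at least $\mult_0(C)^k$, so pairing the nef class $\Theta_1|_{\overline{B}}$, pulled back via the induced map $\overline{\sigma}_k\colon\overline{C^k}\to\overline{B}$, against $[\overline{C^k}]$ should force an additive gain of $\mult_0(C)-1$ beyond Debarre. For the remaining $+1$ in the range $g'<g$, I would invoke Debarre's equality characterization: if $(\Theta|_B\cdot C)=g'$, then $(B,\Theta|_B)$ is a principally polarized Jacobian, whence $B$ splits as a direct factor of $(A,\Theta)$ by the standard sub-ppav decomposition, contradicting indecomposability.

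The hard part will be the precise multiplicity bookkeeping: turning the fiber estimate $\mult_{(0,\ldots,0)}\big(\sigma_k^{-1}(0)\big)\geq\mult_0(C)^k$ into the additive $\mult_0(C)-1$ gain in the intersection of $\Theta_1$ against $\overline{C}$, uniformly in $k$. This seems to require a careful iterated blow-up of $C^k$ at the origin, together with an explicit comparison between the restrictions of $\Theta_1$ to the exceptional divisors and the push-forward behaviour of the lifted sum maps $\overline{\sigma}_k$.
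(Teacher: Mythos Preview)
Your reformulation on the blow-up is correct, and the instinct to build on Debarre's sum construction is natural, but the proposal has a genuine gap at its core and a logical slip in the way the two improvements are combined.

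\textbf{The multiplicity step does not go through as written.} You propose to lift $\sigma_k:C^k\to B$ to a map $\overline{\sigma}_k:\overline{C^k}\to\overline{B}$ between blow-ups at the respective origins. But $\sigma_k^{-1}(0)$ is not the single point $(0,\dots,0)$; it contains, for instance, all tuples $(p,-p,0,\dots,0)$ with $p\in C\cap(-C)$, and in general positive-dimensional loci once $k\geq 2$. So there is no induced morphism between the one-point blow-ups, and the pairing you describe is not defined. Even after replacing this by some legitimate resolution of indeterminacy, you would need to control how the exceptional contribution over $0\in B$ distributes over the many components of $\sigma_k^{-1}(0)$, and there is no mechanism in your sketch that isolates the diagonal origin. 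The heuristic ``$\mult_{(0,\dots,0)}(\sigma_k^{-1}(0))\geq \mult_0(C)^k$ should yield an additive $\mult_0(C)-1$'' is not an argument; the passage from a multiplicative fiber bound to an additive gain on $(\Theta_1\cdot\overline{C})$ is exactly the content of the proposition and is nowhere supplied.

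\textbf{The two improvements do not stack.} Your ``$+1$'' for $g'<g$ appeals to Debarre's equality case $(\Theta|_B\cdot C)=g'$. But once the multiplicity step gives $(\Theta\cdot C)\geq g'+\mult_0(C)-1$, you are no longer in Debarre's equality regime whenever $\mult_0(C)\geq 2$, so his characterization says nothing. To combine the two you would need the equality analysis of your \emph{improved} inequality, which you have not established.

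\textbf{How the paper does it.} The paper avoids all of this by using the endomorphism $\phi=\phi_{\Theta,C}:A\to A$ sending $x$ to the sum (with multiplicities) of the points of $(\Theta-x)\cap C$. The multiplicity enters for free: if $x\in\Theta$ then $0\in(\Theta-x)\cap C$ with local multiplicity at least $q=\mult_0(C)$, so $\phi(x)$ already lies in a $(p-q)$-fold sum $C+\cdots+C\subseteq A'=\langle C\rangle$, where $p=(\Theta\cdot C)$. Irreducibility of $\Theta$ (via the non-degeneracy of the Gauss map) guarantees $C\nsubseteq\Theta-x$ for general $x\in\Theta$, so this description is valid on a dense subset. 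Since $\phi$ is non-constant, $\overline{\phi(\Theta)}$ has dimension $\min\{g-1,g'\}$, and the containment in a $(p-q)$-fold sum of $C$ forces $p-q\geq\min\{g-1,g'\}$ in one stroke. No blow-up bookkeeping and no separate ``$+1$'' are needed; the indecomposability is used only once, through the Gauss map, to ensure the generic intersection is zero-dimensional.
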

\begin{remark}
	It is worth pointing out that when $C$ is non-degenerate then 
	\[
	(\Theta\cdot C) \ \geq \ \textup{mult}_0(C)+g-1 \ .
	\]
This explains some of the phenomena seen in many statements from \cite{D94}.
\end{remark}
\begin{proof}[Proof of Proposition~\ref{prop:debarre}]
For two effective cycles of complementary dimension sitting on an abelian variety, one can associate an endomorphism of the space. So for the pair $(\Theta,C)$ we define
\[
\phi\ \deq \ \phi_{\Theta, C} \ : \ A \ \rightarrow \ A \ .
\]
For any point $x\in A$, for which the intersection $\textup{Supp}(\Theta-x)\cap C$ is zero-dimensional, then $\phi(x)$ is the sum of the points in this intersection taken with the appropriate multiplicities. This can be extended to the whole space since any rational map from an abelian variety is actually a morphism.

Let's fix $p=(\Theta\cdot C)$ and by $q=\mult_0(C)\geq 1$. Our first goal is to show
\[
C\ \nsubseteq \textup{Supp}(\Theta-x) 
\]
 for a general point $x\in\textup{Supp}(\Theta)$. If this does not hold, then continuity yields that $C\subseteq \textup{Supp}(\Theta-x)$ for any $x\in \textup{Supp}(\Theta)$. Consequently, a tangency direction $v\in \textup{T}_0(A)$ of a branch of $C$ is contained in $\textup{T}_0(\Theta-x)$ for any $x\in \textup{Supp}(\Theta)$. But this forces the image  of the Gau\ss  \ map, defined by the irreducible divisor $\Theta$, to be degenerate, i.e. contained in a hyperplane, contradicting \cite[Proposition~4.4.1]{BL04}. 
 
 Taking this into account, our next goal is to study the image $\phi(\Theta)$. As we showed above for a general point $x\in \textup{Supp}(\Theta)$ the intersection of $\Theta-x$ and $C$ is zero-dimensional. Thus 
 \[
 \phi(x) \ = \ k\cdot 0_A\ + \ P_1+\ldots +P_{p-k} \ \in \ \underbrace{C+\ldots +C}_{p-k-\textup{times}} \ \subseteq A'\ \deq  \ <C> \ \subseteq A .
 \] 
for some $k\geq q$ ($k$ depends on the choice of the point). Here $A'$ stands for the $g'$-dimensional abelian subvariety generated by the curve $C$.

Now there is a standard lemma, that whenever one has a dominant rational map $f:X\dashrightarrow Y$, then for any ample divisor $D$ on $X$ we know
\[
\textup{dim}(\overline{(f(D))}) \ = \ \textup{min}\{\textup{dim}(X)-1,\textup{dim}(Y)\}\ .
\]
For a complete proof of this statement one can consult \cite[Lemma~4]{BCL14}.

We apply this statement to $\phi$ and $\Theta$. As $\phi$ is not constant, the dimension formula above, the fact, that the image of a morphism between abelian variety is also abelian, and the minimality property of $A'$ force the following inclusion
\[
\underbrace{C+\ldots +C}_{p-k-\textup{times}}  \ \supseteq \ \textup{closure}(\phi(\Theta))  \ \supseteq \ A' \ ,
\]
whenever $g'<g$ and some $k\geq q$. When $g'=g$, the same force $\phi(\Theta)$ to be an ample divisor on $A$.

Finally, whenever $g'<g$ (or $g'=g$), these ideas yield the following equality of sets 
\[
\underbrace{C+\ldots +C}_{p-k-\textup{times}} \ = \ A' \ ,
\]
for some $k\geq q$ (or $C+\ldots +C$ is an ample divisor in $A$). Counting the dimension, this set-theoretical equality cannot hold, if $p-q \leq g'-1$ (or $p-q\leq g-2$). In particular, this implies the inequality in our main statement and finishes the proof.
\end{proof}
In \cite{Loz20} introduces a conjectural picture for lower bounds of Seshadri constants of polarized abelian varieties. Furthermore, some of these statements are checked to hold in small dimensions. So, applying these ideas together with Proposition~\ref{prop:debarre}, lead to the following corollary:
\begin{corollary}\label{cor:seshadri}
	Let $(A,\Theta)$ be as usual a $g$-dimensional indecomposable ppav with $g\geq 6$. Suppose there exists a curve $C\subseteq A$ passing through the origin such that
	\[
	1 < \epsilon(\Theta)\ =  \ \frac{p}{q} \ \leq \ \frac{g}{g-2} \ ,
	\] 
	where $p=(\Theta\cdot C)$ and $q=\mult_0(C)$. Then $q\geq \frac{5}{2}g-5$ and $p\geq \frac{5g}{2}$.
\end{corollary}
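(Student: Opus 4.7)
The plan is to combine Proposition~\ref{prop:debarre} with the lower bounds on Seshadri constants of arbitrary polarizations established in \cite{Loz20}. Let $A' \subseteq A$ be the abelian subvariety generated by $C$, of dimension $g'$. Proposition~\ref{prop:debarre} yields
\[
p - q \ \geq \ \min\{g-1, g'\},
\]
while the hypothesis $\epsilon(\Theta) = p/q \leq g/(g-2)$ is equivalent to $p - q \leq 2q/(g-2)$. Combining these two inequalities gives the key estimate
\[
\min\{g-1, g'\} \ \leq \ \frac{2q}{g-2}.
\]

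The main step will be to rule out the degenerate case $g' \leq g-2$. Indeed, if this held, the restricted polarization $L := \Theta|_{A'}$ would be ample on $A'$, and its Seshadri constant at the origin would satisfy
\[
\epsilon_0(L, A') \ \leq \ \frac{(L \cdot C)}{\mult_0(C)} \ = \ \frac{p}{q} \ \leq \ \frac{g}{g-2} \ < \ 2.
\]
Applying the generalized Debarre-type lower bounds from \cite{Loz20} to the polarized abelian variety $(A', L)$, and using the indecomposability of $(A,\Theta)$ to prevent $A'$ from splitting off as a product factor, should contradict such a small Seshadri constant. I expect this to be the main obstacle, because this is where the deeper inputs from \cite{Loz20} enter and one must carefully track the polarization type inherited by $A'$; the numerical threshold $g \geq 6$ is there precisely so that the available cases of the generalized conjecture cover the range of possible $g'$.

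Once the case $g' \leq g-2$ is excluded, the minimum is attained at $g-1$, and the key inequality reads $g-1 \leq 2q/(g-2)$, so
\[
q \ \geq \ \frac{(g-1)(g-2)}{2}.
\]
A direct check shows that for $g \geq 6$ this already exceeds $5(g-2)/2 = \tfrac{5g}{2} - 5$, with equality at $g=6$ and strict inequality thereafter since the left-hand side is quadratic in $g$. Finally, combining $p - q \geq g-1$ with the lower bound on $q$ gives
\[
p \ \geq \ q + (g-1) \ \geq \ \frac{(g-1)(g-2)}{2} + (g-1) \ = \ \frac{g(g-1)}{2} \ \geq \ \frac{5g}{2}
\]
for $g \geq 6$, and the proof is complete.
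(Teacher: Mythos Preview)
Your arithmetic and the use of Proposition~\ref{prop:debarre} are fine, but the central step --- ruling out \emph{all} $g' \leq g-2$ --- does not go through. The results of \cite{Loz20} that are actually available give lower bounds on $\epsilon(\Theta|_{A'})$ only in small dimensions (concretely $g' \leq 4$), not for arbitrary $g'$. So for $5 \leq g' \leq g-2$ you have no tool to force $\epsilon(\Theta|_{A'}) > g/(g-2)$, and the argument stalls. Your reading of the threshold $g \geq 6$ is also off: it is not there to make \cite{Loz20} cover the whole range $g' \leq g-2$.

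The paper's proof splits differently, and this is the point you are missing. One only needs to rule out $g' \leq 4$ via \cite{Loz20} (case by case, using that $\Theta|_{A'}$ is not principal by \cite[Lemma~1]{DH07}, and that for $g \geq 6$ one has $g/(g-2) \leq 3/2 < 1.6$). Once $g' \geq 5$, Proposition~\ref{prop:debarre} already gives $p-q \geq \min\{g-1,g'\} \geq 5$, and your own inequality $p-q \leq 2q/(g-2)$ then yields $q \geq \tfrac{5}{2}(g-2)$ and $p \geq \tfrac{5g}{2}$, which are exactly the stated bounds. In other words, the bounds in the corollary are calibrated to $p-q \geq 5$, not to $p-q \geq g-1$; aiming for the latter is what sends you into territory \cite{Loz20} does not cover.
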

\begin{remark}
	The same type of statement holds when the Seshadri constant $\epsilon(\Theta)$ is not defined by a curve. But for our purposes we need only this simpler statement to prove Theorem~\ref{thm:main1}.
\end{remark}

\begin{proof}
	Let $g'$ be the dimension of the abelian subvariety $A'\subseteq A$,  generated by the curve $C$. Suppose $g'\geq 5$, then applying Proposition~\ref{prop:debarre} and condition $g\geq 6$, we get the following two inequalities
	\[
	\frac{p}{q}\ \leq \ \frac{g}{g-2}\textup{ and } p-q\geq 5 \ . 
	\]
	These two instead imply easily the statement.
	
	It remains the cases when $g'\leq 4$. Note that this forces $g>g'$ and by \cite[Lemma~1]{DH07} the restriction $\Theta|_{A'}$ is not a principal polarization. This will lead to a contradiction, due to the work in \cite{Loz20}. To explain this in more details, we divide the proof in smaller three cases.
	 
	If $g'=1$, then $C$ is an elliptic curve. The equality in the statement implies that $(\Theta\cdot C)=1$. So, applying the main result of \cite{N96} forces $\Theta$ not to be irreducible, leading to a contradiction. 
	
	When $g'=2$, as we pointed out just above we know that$(\Theta|_{A'}^2)\geq 4$. So, by \cite[Corollary~3.3]{Loz20} then either $\epsilon(\Theta|_{A'})\geq 2$ or there exists an elliptic curve $F\subseteq A$ such that $(\Theta\cdot F)=1$. Both cases lead to a contradiction to our initial assumptions or the fact that $\Theta$ is irreducible.
	
If remains the cases $g'=3,4$. If $(A',\Theta|_{A'})$ is indecomposable, then \cite[Proposition~1.2]{Loz20} implies  $\epsilon(\Theta|_{A'})\geq 1.6$, since our pair is not principle. But for $g\geq 6$ this lower bound contradicts the one in the statement. Now, assume $(A',\Theta|_{A'})$ is decomposable. By \cite[Proposition~3.4]{MR15}, the Seshadri constant of $\Theta|_{A'}$, and by assumptions also that of $\Theta$, is computed on one of the components of dimension at most three. Restricting $\Theta$ to this component, by \cite[Lemma~1]{DH07} this Cartier divisor is again not principle. By \cite[Theorem~1.3]{Loz20} we then get either a contradiction or this component is again decomposable, which was already treated above.
\end{proof}

\subsection{Irreducible theta divisors and their multiplicities.}
Here we present the proof of Theorem~\ref{thm:main1}. When the dimension of our abelian ambient space is at most five, then the statement follows from \cite[Theorem~4.6]{C08b}. We present the higher-dimensional case in this subsection.
\begin{theorem}\label{thm:upperbound}
	Let $(A,\Theta)$ be an indecomposable ppav of dimension $g\geq 6$. Then the multiplicity at any point of the divisor $\Theta$ is at most $g-2$.
\end{theorem}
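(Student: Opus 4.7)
Since the Ein--Lazarsfeld bound $\codim\Sigma_k(\Theta)\geq k$ is strict whenever $k\geq 2$ and $(A,\Theta)$ is indecomposable, one already has $\dim\Sigma_k(\Theta)\leq g-k-1$ in that range, and in particular $\mult_0(\Theta)\leq g-1$. The plan is to argue by contradiction and rule out the boundary case, so assume $\mult_0(\Theta)=g-1$; equivalently, in the notation of Section~2, $\mu(\Theta)\geq g-1$, with $\Theta$ itself pulling back to an effective divisor in the class of $\Theta_{g-1}$ up to a nonnegative multiple of $E$.

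The first step is to bound the image of the stable base loci under $\pi$. Repeating the differentiation argument from the proof of Proposition~\ref{prop:asymptotic}, for any $\overline{V}\subseteq\Bstable(\Theta_{t_0})$ not contained in $E$ one has $\mult_{\pi(\overline{V})}(\Theta)\geq g-1-t_0$, so $\pi(\overline{V})\subseteq\Sigma_k(\Theta)$ for $k=\lceil g-1-t_0\rceil$. Combining with the strict Ein--Lazarsfeld inequality $\dim\Sigma_k(\Theta)\leq g-k-1$ yields
\[
\dim\pi\bigl(\Bstable(\Theta_{t_0})\bigr)\ \leq\ \lfloor t_0\rfloor
\]
for every rational non-integer $t_0\in\bigl(\epsilon(\Theta),g-2\bigr)$.

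The second step is to set up a B\'ezout calculation on $A$ using $\Theta$ as an extra ``free'' factor of multiplicity $g-1$. For an integer $j$ with $2\leq j\leq g-1$ and a small rational $\delta>0$, take $\Theta$ together with $g-j$ very general divisors in $|\Theta|$ obtained as pushforwards of very general members of $|m\Theta_{j-\delta}|$ (each of multiplicity $j-\delta$ at $0$) and $j-1$ very general divisors in $|\Theta|$ pushed forward from the ample class $|m\Theta_{\epsilon-\delta}|$ (each of multiplicity $\epsilon(\Theta)-\delta$ at $0$). The base-locus estimate of Step~1 guarantees the intersection of these $g$ divisors in $A$ is zero-dimensional, and B\'ezout gives
\[
g!\ =\ \Theta^g\ \geq\ (g-1)\cdot(j-\delta)^{g-j}\cdot\bigl(\epsilon(\Theta)-\delta\bigr)^{j-1}.
\]
Letting $\delta\downarrow 0$ and optimising $j$ for each $g\geq 6$ produces an explicit upper bound $\epsilon(\Theta)\leq\epsilon_0(g)$.

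The third step is the arithmetic input that closes the case, combining the B\'ezout bound with the small-Seshadri analysis of Corollary~\ref{cor:seshadri}. In the regime $\epsilon(\Theta)\leq g/(g-2)$ left open by Step~2, Corollary~\ref{cor:seshadri} forces the computing curve $C$ to satisfy $\mult_0(C)\geq \tfrac{5}{2}g-5$ and $(\Theta\cdot C)\geq\tfrac{5}{2}g$; by Lemma~\ref{lem:lociseshadri} the proper transform $\overline{C}$ sits in $\Bstable(\Theta_t)$ for every $t>\epsilon(\Theta)$, and one feeds this into the B\'ezout count by replacing one of the ample-class divisors with a contribution coming from $C$, yielding a sharp numerical incompatibility. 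I expect this last step to be the main obstacle: the B\'ezout optimisation of Step~2 is by itself not strong enough to rule out all of $\epsilon(\Theta)\in\bigl(1,g/(g-2)\bigr]$, so one has to thread the Ein--Lazarsfeld codimension bound, the B\'ezout estimate, and the arithmetic input from Corollary~\ref{cor:seshadri} through a careful case analysis, particularly in the low-dimensional cases $g=6,7$ (the dimensions $g\leq 5$ already being settled by \cite[Theorem~3]{C08b}), and for $\epsilon(\Theta)$ that is irrational or not achieved by a curve, where one should apply Lemma~\ref{lem:lociseshadri} to an approximating sequence of higher-dimensional Seshadri subvarieties instead.
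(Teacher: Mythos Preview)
Your overall architecture---assume $\mult_0(\Theta)=g-1$, bound the dimensions of the pushed-forward base loci via strict Ein--Lazarsfeld, run B\'ezout on $A$, and finish the small-Seshadri regime with Corollary~\ref{cor:seshadri}---does match the paper. But Step~2 as written is too weak, and this creates a genuine gap between Steps~2 and~3.

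Your B\'ezout uses $\Theta$ together with $g-j$ divisors all of the \emph{same} multiplicity $j-\delta$ and $j-1$ ample ones, yielding $g!\geq (g-1)\,j^{g-j}\,\epsilon^{j-1}$. For $g=6$ the best choice $j=4$ gives only $\epsilon\leq\sqrt[3]{9}\approx 2.08$, far above $g/(g-2)=1.5$; for no $g\geq 6$ does optimising over $j$ push $\epsilon_0(g)$ down to $g/(g-2)$. Since Corollary~\ref{cor:seshadri} applies only when $\epsilon\leq g/(g-2)$, the interval $\bigl(g/(g-2),\epsilon_0(g)\bigr]$ is covered by neither step. The paper closes this gap by using divisors with a \emph{full spectrum} of multiplicities $2,3,\ldots,g-1$ at the origin, obtained not as general members of $|m\Theta_{j-\delta}|$ but by applying differential operators of order $m(g-i-1)$ to $m\Theta$ itself. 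These differentiated divisors $H_i$ satisfy $\mult_0(H_i)\geq i$, and the same Ein--Lazarsfeld input bounds the base loci of the linear systems they span; together with one further divisor $H_g$ from $\Theta_{g-2}$, the B\'ezout product becomes $\epsilon\cdot 2\cdot 3\cdots(g-1)\cdot(g-2)$, which gives exactly $\epsilon\leq g/(g-2)$.

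The differentiation construction is also what makes Step~3 go through. Because each $H_i$ arises from $\Theta$ by an operator of order $g-i-1$, and $\mult_C(\Theta)\geq g-2$ (an integer, since $\Theta$ is integral and $\epsilon<2$), one gets $\mult_C(H_i)\geq i-1$; general members of $|m\Theta_{j-\delta}|$ only give $\mult_C\geq j-\epsilon$, which is weaker. The paper's second B\'ezout then reads $g!\geq (g-2)!\cdot\bigl(g-2-\tfrac{g}{g-2}\bigr)\cdot(\Theta\cdot C)$, and plugging in $(\Theta\cdot C)\geq\tfrac{5g}{2}$ from Corollary~\ref{cor:seshadri} contradicts $g\geq 6$. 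Your sketch of ``replacing one of the ample-class divisors with a contribution coming from $C$'' does not supply these multiplicity bounds along $C$, and without the differentiation idea there is no obvious way to get them. Finally, the paper also uses the codimension bound on the differentiated system $\mathbb{B}^{g-3}$ to conclude that $\Bstable(\Theta_{2-\delta})$ is at most one-dimensional, which is what guarantees $\epsilon$ is computed by a \emph{curve}; your proposal flags this as an issue but does not resolve it.
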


\begin{proof}
	We do the proof by contradiction. Taking into account the main result of \cite{SV96}, we can assume without loss of generality that
	\[
	\mult_0(\Theta)\ = \  g-1 \ .
	\]
In particular, $\mu= \mu(\Theta;0)\geq g-1$ and our goal is to get a contradiction.
	
The basic framework of the proof is similar to the one explained for Proposition~\ref{prop:asymptotic}. First, is the idea of differentiation as explained in \cite[Proposition~4.4]{Loz18}, inspired by \cite{ELN94}. Second, we use the main results \cite{EL97} to construct $g$ divisors $H_1,\ldots , H_{g}$, whose classes are close approximations of $\Theta$. The big difference is that we will obtain these divisors by differentiating $\Theta$.

We start by explaining in more details the idea of differentiation on an abelian variety. If $L$ is a line bundle on $A$, let $\mathscr{D}_L^k$ be the sheaf of differential operators of order $\leq k$ on $L$. Taking $R\in |L|$ to be an effective divisor, then there is a natural surjective homomorphism $\mathscr{D}_L^k\rightarrow L$, that locally assigns to a differential operator $D$ the function $D(f)$, where $f$ is a local function representing the divisor $R$, see \cite[Section~2]{ELN94} for a detailed description of this process.
	
Adjusting \cite[Lemma~2.5]{ELN94} to the pair $(A,\Theta)$ and making use of the classical fact that the divisor $3\Theta$ defines a very ample line bundle, one can show that for any $r,k\geq 1$ the twisted sheaf
	\[
	\mathscr{D}_{r\Theta}^k\otimes \sO_A((3g+3)\Theta)
	\]
	 is globally generated. Let $R=r\Theta$ be our effective divisor, then this defines a linear subsystem 
	 \[
	 V_r^k(\Theta) \ \deq \ \textup{Im}\Big(H^0\big(A,\mathscr{D}_{r\Theta}^k\otimes \sO_A((3g+3)\Theta)\big)\longrightarrow H^0\big(A, \sO_A((r+3g+3)\Theta\big)\Big) \ ,
	 \]
	 whose sections are obtained by process of differentiation of the divisor $r\Theta$ by differential operators of degree $\leq k$. Furthermore, this linear system globally generates the line bundle on the right.
	 
	 Now, for any $i=1, \ldots ,g-2$ and integer $m\geq 1$ we consider the system $V_m^{mi}$. Its elements are obtained by differentiating. So, the fact that $\mult_0(\Theta)\geq g-1$ yields
	 \[
	 H\ \equiv m\cdot (1+\delta)\Theta \textup{ and } \mult_0(H)\geq m\cdot (g-1-i) \ ,
	 \] 
for any $H\in V_m^{mi}$ and setting $\delta=\frac{3g+3}{m}$. 
	
	In the following we let $m\gg 0$, which also translates into $0<\delta\ll 1$. Before going to define the divisors $H_1,\ldots ,H_g$ we need to understand better the behaviour of the following loci
	\[
	A \ \supseteq \ \mathbb{B}^{g-i-1}(\Theta)\ \deq \ \textup{base locus}\Big(V_m^{m(g-i-1)}\Big) \ ,
	\]
	for each $i=1,\ldots ,g-2$. 
	
	Based on the main result of \cite{EL97}, our first goal is to show the following lower bound:
	\begin{equation}\label{eq:base}
	\textup{codim}_A(\mathbb{B}^{g-i-1}(\Theta)) \ \geq \ g-i+1, \textup{ for any } i=1, \ldots ,g-2 \ .
	\end{equation}
	To do so, fix $i=1,\ldots ,g-2$ and let $V\subseteq \mathbb{B}^{g-i-1}(\Theta)$ be a subvariety of codimension at most $g-i$. By above, for any differential operator of order $\leq m(g-i-1)$ there is a divisor in $V_m^{m(g-i-1)}$, locally constructed by applying this operator to $m\Theta$. But $V$ is contained in the support of all the divisors in $V_m^{m(g-i-1)}$, and as the multiplicity is defined locally in terms of differential operators, then
	\[
	\mult_V(m\cdot\Theta) \ \geq  \ m(g-i-1)+1 \ .
	\]
Thus a subvariety of codimension at most $g-i$ is in $\Sigma_{g-i}(\Theta)$, as $\Theta$ is an integral divisor. By \cite[Corollary~2]{EL97} this holds only when $(A,\Theta)$ is a product, contradicting the irreducibility of $\Theta$.
	
In what remains we assume $(\ref{eq:base})$ holds. Our next goal is to construct divisors $H_1,\ldots ,H_{g}$ with
		\begin{equation}\label{eq:intersection}
	\textup{Supp}\Big(H_1\ \cap \ \ldots \ \cap \ H_g \Big) \textup{ is finitely many points.}
	\end{equation}
	For each $i=2,\ldots ,g-1$ consider the following divisor 
	\[
	H_i \ = \ \frac{1}{m}H^i, 
	\]
	where $H^i\in V_m^{m(g-i-1)}$ is a very general choice of divisor in the linear system. In particular, by $H_i\equiv (1+\delta)\Theta$ and $\mult_0(H_i)\geq i$. Moreover, $H_{g-1}=\Theta$, as there is no differentiation here and we choose $H_g=\pi_*(\overline{H}_g)$, where $\overline{H}_g$ is a general choice divisor in the class $\Theta_{\frac{g-2}{1+\delta}}$. 
	
We choose $H_1$ slightly later, but first let's prove inductively that $H_i\cap \ldots \cap H_g$ has the support of codimension $g-i$ for all $i\geq 2$. So, first note that $(\ref{eq:base})$ forces $\mathbb{B}^1(\Theta)$ to be of codimension at least three. Translating this property to the blow-up yields, we deduce then  the following inequality 
	\[
	\textup{codim}_{A}\Big(\pi\big(\Bstable\big(\Theta_{\frac{g-2}{1+\delta}}\big)\big)\Big)\ \geq \ 3 \ .
	\]  
These two conditions on the base loci forces the intersection of $H_{g-2},H_{g-1}$ and $H_g$ to be an effective cycle of codimension $3$. Consequently, applying inductively $(\ref{eq:base})$,  $H_2,\ldots ,H_g$ have a proper intersection as an effective cycle of dimension one. 
	
	Finally, take $H_1$ to be the proper push-forward by the blow-up morphism of a very general choice of a divisor in the class $\Theta_{\epsilon-\delta}$, where $\epsilon=\epsilon(\Theta)$. Since this class is ample on $\overline{A}$, then we easily deduce by above that our choices of our divisors satisfies automatically $(\ref{eq:intersection})$. 
	
Our next step is to use intersection theory. To do so, we know that the choices, we made, forces our divisors to satisfy the following properties:
	\[
	\mult_0(H_i)\geq i, \textup{ for any }i=2,\ldots g-1,\  \mult_0(H_1)=\epsilon-\delta \textup{ and } \mult_0(H_g)= \frac{g-2}{1+\delta}\ .
	\]
	We also know the class in which each divisor $H_i$ lies. As the intersection of these $g$ divisors on $A$ is zero-dimensional, then B\'ezout's theorem yields the following inequality
	\[
\big(H_1\cdot \ldots \cdot H_{g}\big) \ \geq \ (\epsilon-\delta)\cdot \Big(\frac{g-2}{1+\delta}\Big)\cdot (g-1)! \ .
	\]
Letting $m\rightarrow \infty$, and so $\delta\rightarrow 0$, and using the above properties of each $H_i$, this yields
	\[
	g! \ \geq \ \epsilon\cdot (g-2)\cdot (g-1)! \ .
	\]
In particular, if $\epsilon(\Theta;0)>\frac{g}{g-2}$, we get automatically the desired contradiction.
	
Our final step is to deal with the case when the Seshadri constant of $\Theta$ is small, i.e. 
	\[
	\epsilon(\Theta;0)\ \leq \  \frac{g}{g-2} . 
	\]
In this situation note first that $\mathbb{B}^{g-3}(\Theta)$ is at most one dimensional by $(\ref{eq:base})$. As each divisor in $V_{m}^{m(g-3)}$ has multiplicity at least $2$ at the origin, translating this data to the blow-up $\overline{A}$ forces the base locus $\Bstable(\Theta_{2-\delta})$ to be at most one-dimensional for some $0<\delta\ll 1$. So, by Lemma~\ref{lem:lociseshadri} and \cite[Proposition~5.1.9]{PAG} there is a curve $C\subseteq A$, with $p=(\Theta\cdot C)$ and $q=\mult_0(C)$, such that
\[
\epsilon(\Theta;0) \ = \ \frac{p}{q} \ \leq \ \frac{g}{g-2} \ .
\]
Now, going back to the divisors $H_2,\ldots ,H_g$, we defined above, we know that they intersect properly in an effective one-dimensional cycle. So, our next goal is to show that $C$ is contained in this intersection and compute a lower bound on the multiplicity of each divisors along this curve. 

First by \cite[Proposition~4.4]{Loz18} and Lemma~\ref{lem:lociseshadri}, it is not hard to deduce $\mult_{C}(H_{g-1})\ \geq  \ g-2$ as $H_{g-1}=\Theta$ and thus integral. For the rest of $i=2,\ldots ,g-2$ the divisor $H_i$ is obtained  by applying a general choice of a differential operator of order $m(g-i-1)$ to $m\Theta$ and then normalizing. Thus the inequality just above and the behaviour of multiplicity under differentiation imply that
\[
\mult_{C}(H_i) \ \geq \ i-1 \ ,
\]
for all $i=2,\ldots ,g-1$. 

Finally, as $\mult_0(H_g)= \frac{g-2}{1+\delta}$, then \cite[Proposition~4.4]{Loz18} yields
\[
\mult_{C}(H_g) \ \geq \ \frac{g-2}{1+\delta}-\frac{g}{g-2} \ .
\]
Going back to intersection theory, $C$ appears in the support of the one-dimensional effective cycle $H_2\cap\ldots\cap H_g$ with the multiplicity at least the product of the multiplicities of each $H_i$ along $C$. Applying B\'ezout provides then the following inequality
\[
\big(\Theta\cdot H_2\cdot \ldots \cdot H_{g}\big) \ \geq \  (g-2)!\cdot \Big(\frac{g-2}{1+\delta}-\frac{g}{g-2}\Big)\cdot  (\Theta\cdot C) \ .
\]
Letting $m\rightarrow \infty$, and so $\delta\rightarrow 0$, and using the above properties of each $H_i$, this yields
\[
g! \ \geq \ (g-2)!\cdot \Big(g-2-\frac{g}{g-2}\Big)\cdot (\Theta\cdot C) \ .
\]
Finally, our assumption on the upper bound on the Seshadri constant of $\Theta$ allows us to apply Corollary~\ref{cor:seshadri}. Plugging this into the inequality above yields
\[
g! \ \geq \ (g-2)!\cdot (g-2-\frac{g}{g-2})\cdot \frac{5g}{2} \ .
\]
But whenever $g\geq 6$, it easy to see that this inequality cannot hold. So, we get a contradiction also for this final step and finish the proof.
\end{proof}

\section{Bounds on the infinitesimal width for irreducible theta divisors}
In this section we study non-trivial upper bounds on the infinitesimal width of indecomposable ppavs. In particular, we provide a proof to Theorem~\ref{thm:main2} and its stronger version in small dimensions.

For the former, the main ingredient is a criteria, based on \cite{LPP11}, that checks when the linear system of an ample line bundle on an abelian variety separates tangency directions at each point. This is done in terms of the existence of effective $\QQ$-divisors with certain singularities.

Finally, Debarre's conjecture is known for $g\leq 4$. So, using the arithmetic properties of intersection numbers together with plenty of technical computations, we give effective and concrete bounds on the infinitesimal width of a theta divisor sitting on abelian varieties of dimension up to four.

\subsection{Proof of Theorem~\ref{thm:main2}.}

In this subsection a pair $(A,L)$ is an abelian $g$-dimensional variety $A$ and $L$ an ample line bundle on $A$. Inspired by \cite{LPP11}, we provide a criteria for separation of tangency directions by $|L|$  in terms of the existence of divisors with certain singularities.
\begin{proposition}\label{prop:tangency}
	Let $(A,L)$ be a polarized abelian variety of dimension $g$. Suppose that on $A$ there exists an effective $\QQ$-divisor $D\equiv c\dot L$, for some $0<c<1$,  satisfying the following two conditions:
	\begin{enumerate}
		\item $\textup{Supp}(\sJ(A;D))$ is zero-dimensional in a neighborhood of the origin.
		\item  $\sJ(A;D)\subseteq \m_{A,0}^2$.
	\end{enumerate}
	Then $L$ separates tangency directions at each point of $A$.
\end{proposition}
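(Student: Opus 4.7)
The plan is to translate the singularity hypotheses on $\sJ(A;D)$ into a surjectivity statement for the evaluation of sections of $L$, using Nadel vanishing in the spirit of \cite[Chapter~9]{PAG} and \cite{LPP11}. Recall that $|L|$ separates tangency directions at $x\in A$ exactly when the evaluation map $H^0(A,L)\to L(x)\otimes\sO_{A,x}/\m_{A,x}^2$ is surjective. Since numerical equivalence is translation-invariant on an abelian variety and $\sJ(A;t_{-x}^*D)=t_{-x}^*\sJ(A;D)$, the translated divisor $t_{-x}^*D$ still satisfies hypotheses (1) and (2) with $0$ replaced by $x$. Thus it suffices to establish surjectivity at the origin, after which the general case follows by translation.

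At $x=0$, the plan is to use the short exact sequence
\begin{equation*}
0 \longrightarrow \sJ(A;D)\otimes L \longrightarrow L \longrightarrow L\otimes \sO_A/\sJ(A;D) \longrightarrow 0.
\end{equation*}
Since $L-D\equiv (1-c)L$ is ample with $0<c<1$, Nadel vanishing yields $H^1\bigl(A,\sJ(A;D)\otimes L\bigr)=0$, so the induced map $H^0(A,L)\twoheadrightarrow H^0\bigl(A,L\otimes \sO_A/\sJ(A;D)\bigr)$ is surjective.

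Next, I would extract the second-jet quotient at $0$ from the right-hand side. By condition (1), the origin is an isolated point of $\Supp(\sO_A/\sJ(A;D))$, hence $\{0\}$ is an open-closed subset of this support. Consequently $\sO_A/\sJ(A;D)$ splits as a direct sum of a skyscraper supported at $0$ and a sheaf supported away from $0$, giving a surjection $H^0\bigl(A,L\otimes \sO_A/\sJ(A;D)\bigr)\twoheadrightarrow L(0)\otimes \sO_{A,0}/\sJ(A;D)_0$. Condition (2), $\sJ(A;D)_0\subseteq \m_{A,0}^2$, then yields a further surjection onto $L(0)\otimes \sO_{A,0}/\m_{A,0}^2$. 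Composing all three surjections completes the argument at $0$, and translation handles the remaining points.

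The main (and essentially only) subtle point is the splitting step: one must use the precise wording of condition (1), namely that the support is zero-dimensional \emph{in a neighborhood of the origin}, to guarantee that $\{0\}$ is a clopen component of $\Supp(\sO_A/\sJ(A;D))$, no matter what may happen on the rest of $A$. Once this local-to-global separation is in place, the remainder is a direct application of Nadel vanishing and translation invariance.
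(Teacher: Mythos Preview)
Your proof is correct and reaches the same conclusion as the paper, but via a more streamlined route. The paper proceeds in three steps: first the special case $\sJ(A;D)=\m_{A,0}^2$; then the case where the cosupport of $\sJ(A;D)$ is globally zero-dimensional, using the exact sequence $0\to\sJ(A;D+x)\to\m_{A,x}^2\to\m_{A,x}^2/\sJ(A;D+x)\to 0$ to deduce $H^1(A,L\otimes\m_{A,x}^2)=0$; and finally the general case, by factoring $\sJ(A;D+x)=\sI(x)\cdot\sI_{Y_x|A}$ with $x\notin Y_x$ and $\sI(x)$ of zero-dimensional cosupport, and running a direct-summand argument on two short exact sequences to obtain $H^1(A,\sI(x)\otimes L)=0$. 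Your argument bypasses all of this: you apply Nadel vanishing once to $\sJ(A;D)\otimes L$, then use the clopen property of $\{0\}$ in $\Supp(\sO_A/\sJ(A;D))$ to split the quotient and project onto the stalk at $0$, after which condition~(2) finishes. What the paper's organization buys is that it isolates the intermediate vanishing $H^1(A,L\otimes\m_{A,x}^2)=0$ as a standalone statement; what your approach buys is that it avoids the ideal factorization of Step~3 entirely and needs only one exact sequence.
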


\begin{proof}
	We divide the proof in three steps. 
	
	$\fbox{\textit{Step: 1}}$ Whenever $\sJ(A;D)=\m^2_{A,0}$, the statement holds.
	
   By our assumption, then $\sJ(A;D+x)=\m^2_{A,x}$ for any $x\in A$. So, $H^1(A,L\otimes \m_{A,x}^2)= 0$ for any $x\in A$, by Nadel's vanishing. This forces the surjectiveness of the map on global sections
	\[
	H^0(L\otimes \m_{A,x}) \ \longrightarrow \ H^0\Big(L\otimes \frac{\m_{A,x}}{\m_{A,x}^2} \Big) \ \simeq \ T_xA, 
	\]
implying that the global sections of $L$ separate tangency directions at the point $x\in A$.
	
	$\fbox{\textit{Step: 2}}$ The statement holds if $\sJ(A;D)_{A,0}\subseteq \m^2_{A,0}$ and $\textup{Supp}(\sJ(A;D))$ is zero-dimensional.
	
	The first assumption yields the following short exact sequence:
	\[
	0 \ \rightarrow \ \sJ(A;D+x)\ \rightarrow \ \m^2_{A,x}\ \rightarrow \ \frac{\m^2_{A,x}}{\sJ(A;D+x)}\ \rightarrow \ 0 \ .
	\]
	The second assumption implies that the sheaf on the right has zero-dimensional support. Taking the long exact sequence in cohomology and applying Nadel vanishing, we deduce again that $H^1(A,L\otimes \m^2_{A,x}) = 0$, for any $x\in A$.	The ideas from Step~$1$ lead then again to the desired conclusion.
	
	$\fbox{\textit{Step: 3}}$ Whenever the assumptions in the statement hold, the proposition holds.	
	
	This part is inspired by \cite[Lemma~3.1]{LS12}. The idea is to write our multiplier ideal as a product
	\[
	\sJ(A;D+x)\ = \ \sI(x)  \otimes_{\sO_A}  \sI_{Y_x|A}\ = \ \sI(x) \ \cdot \ \sI_{Y_x|A} \ ,
	\]
	where $Y_x\subseteq A$ a subscheme, with $x\notin Y_x$ and $\sI(x)\subseteq \m^2_{A,x}$ is an ideal with zero-dimensional support. 
	
	Under these assumptions, we then can associate two short exact sequences
	\[
	0\rightarrow \sI(x)\otimes_{\sO_A}\sI_{Y_x|A}\rightarrow \sI(x)\rightarrow \frac{\sI(x)}{\sI(x) \cdot \sI_{Y_x|A}}	\rightarrow 0
	\]
	and 
	\[
	0\rightarrow \sI(x)\otimes_{\sO_A}\sI_{Y_x|A}\rightarrow \sO_{A}\rightarrow \frac{\sO_A}{\sI(x) \cdot \sI_{Y_x|A}}	\rightarrow 0 \ .
	\]
	Since $x\notin Y_x$, the sheaf on the right in the first sequence is a direct summand of the sheaf on the right of the second one. So, tensoring both sequences with $L$, taking the long exact sequences in cohomology, and applying this latter idea together with Nadel's vanishing, will force $H^1(A,\sI(x)\otimes L)=0$ for any $x\in A$. 
Finally, this vanishing together with Step~2 imply easily our statement. 
\end{proof}

With this proposition in hand, we present now the proof of Theorem~\ref{thm:main2}.
\begin{proof}[Proof of Theorem~\ref{thm:main2}]
	Assume that our conclusion does not hold, meaning that
	\[
	\mu(\Theta) \ > \ g-\frac{g-1}{g+1}\ .
	\]
	Our goal is to get a contradiction. 
	
Based on our assumptions we also know $\epsilon(\Theta;0)\geq \frac{2g}{g+1}$. Now, consider a rational number $0<\delta \ll 1$ and two very general choices of $\QQ$-effective divisors 
	\[
	\overline{D}_1\equiv \Theta_{\frac{2g}{2g+1}-\frac{\delta}{2}}\ \textup{ and }\ \overline{D}_2\equiv \Theta_{g-\frac{g-1}{g+1}+\delta} \ .
	\]
	on the blow-up $\overline{A}$ of $A$ at the origin. As usually, we choose these two divisors in the following way. We consider the linear system of some really large power of these rational big classes, so everything makes sense, and then take a general choice in each system,  and finally normalize. The existence of the second divisor is due to our assumption on the infinitesimal width of the theta divisor.
	
	Now consider the push-forward divisors $D_1=\pi_*(\overline{D}_1)$ and $D_2=\pi_*(\overline{D}_2)$. Since $D_2\equiv \Theta$, then by \cite[Proposition~3.5]{EL97} we know that this divisor is log-canonical, i.e. 
	\[
	\sJ(A, (1-c)D_2) \ = \ \sO_A \ , \textup{ for any } 0<c<1 \ .
	\]
	On the other hand, our lower bound on the Seshadri constant yields that the first divisor $\overline{D}_1$ moves in an ample class. So, making use of \cite[Example~9.2.29]{PAG}, we obtain that 
	\[
	\sJ(A;D_1+(1-c)D_2)|_{A\setminus\{0\}} \ = \ \sO_{A\setminus\{0\}}, \textup{ for any } 0<c<1 \ .
	\] 
	In particular, this says that the support of the multiplier ideal is zero-dimensional.
	
	On the other hand, by taking $c\ll \delta$, it is not hard to see that $\mult_0(D_1+(1-c)D_2)\geq g+1$. But this lower bound, by \cite[Proposition~9.3.2]{PAG}, forces the following inclusion
	\[
	\sJ(A;D_1+(1-c)D_2) \ \subseteq \ \m^2_{A,0} \ .
	\]
	With these ideas in hand, then Proposition~\ref{prop:tangency} implies that the linear system associated to the line bundle $L=\sO_A(2\Theta)$ separates the tangency directions at any point on $A$.
	
	But this latter statement clearly cannot hold. The main reason is that when $\Theta$ is irreducible, the linear system $|2\Theta|$ defines a $2:1$ map $A\rightarrow \PP^{2^g-1}$, where the image of this map is the Kummer variety $\textup{Kum}(A)=A/\{\pm\}$. And this map is not \'etale, since it is ramified at the two torsion points. So, there is no way that it can separate all the tangency directions at exactly these $2$-torsion points. This leads to a contradiction and finishes the proof of the theorem.
\end{proof}

\subsection{Infinitesimal width for abelian three-folds}
Here we study the infinitesimal width of an irreducible theta divisor sitting on an abelian three-fold. 
\begin{theorem}\label{thm:abelian3}
	Let $(A,\Theta)$ be an indecomposable three-dimensional ppav. Then 
	\[
	\mult_x(D) \ \leq \ 2 \  ,
	\]
	for any effective $\QQ$-divisor $D\equiv \Theta$ and any $x\in A$. In particular, $\mu(\Theta)\leq 2$.
\end{theorem}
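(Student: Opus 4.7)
The plan is to argue by contradiction, exploiting intersection theory on the blow-up $\pi : \overline{A} \to A$ of the origin, with exceptional divisor $E \simeq \PP^2$. Assume $\mu := \mu(\Theta) > 2$; by definition there is an effective $\QQ$-divisor $\overline{D}$ on $\overline{A}$ in the class $\Theta_t := \pi^*\Theta - tE$ for some $t > 2$, and after absorbing any $E$-component we may assume $E \not\subseteq \mathrm{Supp}(\overline{D})$. The fundamental identities $(\pi^*\Theta)^3 = 6$, $(\pi^*\Theta)^a \cdot E^{3-a} = 0$ for $a = 1, 2$, and $E^3 = 1$ then give $\overline{D}^3 = 6 - t^3 < -2$. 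Independently, the general bound $\epsilon(\Theta)^3 \leq \Theta^3 = 6$ already forces $\epsilon(\Theta) \leq \sqrt[3]{6} < 2$; this ceiling on $\epsilon(\Theta)$ will ultimately be violated by the Seshadri curve produced by the argument.

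The first main step is to locate the geometric obstruction to $\overline{D}$. Since $\overline{D}^3 < 0$, the class is not nef, so $\Bstable(\Theta_s)$ carries a positive-dimensional component not contained in $E$ for $s$ close to $t$. By the differentiation lemma \cite[Proposition 4.4]{Loz18}, for any subvariety $\overline{V} \subseteq \Bstable(\Theta_{s_0})$ one has $\mult_V(||\Theta_s||) \geq s - s_0$ for $s \geq s_0$, where $V = \pi(\overline{V})$. Choosing $s_0$ just above the level at which $\overline{V}$ enters the stable base locus and applying this to $\Theta$ itself, one concludes $\mult_V(\Theta) \geq 2$, i.e., $V \subseteq \Sigma_2(\Theta)$. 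For indecomposable $(A,\Theta)$, \cite[Corollary 2]{EL97} forces $\codim_A \Sigma_2(\Theta) \geq 2$, so the three-dimensional $A$ only allows $V$ to be a curve $C$ through the origin.

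The second main step is to apply Proposition~4.2 to this curve: $(\Theta \cdot C) \geq \mult_0(C) + \min\{2, g'\}$, where $g' = \dim \langle C \rangle$, and then run a case analysis on $g' \in \{1, 2, 3\}$ paralleling that of Corollary~4.3. For $g' = 1$, $C$ would be an elliptic curve; the case $(\Theta \cdot C) = 1$ is excluded by Nakamaye \cite{N96} since $\Theta$ is irreducible, whence $\epsilon(\Theta) \leq (\Theta \cdot C)/\mult_0(C)$ combined with Proposition~4.2 gives $\epsilon(\Theta) \geq 2$, contradicting the ceiling above. For $g' = 2$ and $g' = 3$, the restriction $\Theta|_{\langle C \rangle}$ is not a principal polarization by \cite[Lemma 1]{DH07}, and the low-dimensional Seshadri-constant results of \cite{Loz20} (together with \cite[Proposition 3.4]{MR15}) yield lower bounds on $\epsilon(\Theta|_{\langle C \rangle})$ incompatible with the constraints imposed by $t > 2$.

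The hardest part will be the first step. One must carefully separate components of $\Bstable(\Theta_t)$ supported inside $E$ (which are harmless, as they can be absorbed by shifting $t$) from those projecting non-trivially to $A$, and show the latter genuinely exist. The hyperelliptic genus-three Jacobian case, where equality $\mu = 2$ is attained, provides a natural sharpness test: the case analysis in step two must produce strict contradictions from $t > 2$ in every branch, while leaving room for $t = 2$ to occur on the boundary.
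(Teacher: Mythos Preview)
Your first main step contains a genuine gap. You claim that differentiation applied ``to $\Theta$ itself'' yields $\mult_V(\Theta)\geq 2$, but the differentiation bound $\mult_{\overline{V}}(\|\Theta_s\|)\geq s-s_0$ gives information about a specific divisor only when that divisor lies in the class $\Theta_s$ for some $s\geq s_0$. The proper transform of $\Theta$ lies in $\Theta_{m_0}$ with $m_0=\mult_0(\Theta)$, so you would need $m_0-s_0\geq 2$. Since $s_0\geq\epsilon(\Theta)\geq 1$, this forces $\mult_0(\Theta)\geq 3$ --- which the hypothesis $\mu(\Theta)>2$ does \emph{not} provide. The assumption $\mu>2$ only says that \emph{some} effective $\QQ$-divisor $D\equiv\Theta$ has $\mult_0(D)>2$; it says nothing about the specific divisor $\Theta$, which may be smooth at the origin. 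Consequently you cannot place $V$ inside $\Sigma_2(\Theta)$, and the invocation of \cite{EL97} is unjustified. (Separately, \cite[Corollary~2]{EL97} for indecomposable pairs actually gives $\codim\Sigma_2(\Theta)\geq 3$, not $\geq 2$, so had step~1 worked you would get a point, not a curve, and step~2 would be superfluous.) Redirecting toward $\Sigma_2(D)$ for the extremal $D$ does not help either: the log-canonicity results of \cite{EL97,H99} give only $\codim\Sigma_2(D)\geq 2$ without the strict improvement, and the differentiation estimate $\mult_V(D)\geq t-s_0$ need not reach $2$ when $t$ is only slightly above $2$.

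The paper's argument bypasses this obstacle by exploiting the classification in dimension three. Every indecomposable three-dimensional ppav is a Jacobian, and \cite{BS01} computes $\epsilon(\Theta)\in\{\tfrac{3}{2},\tfrac{12}{7}\}$ according to whether the curve is hyperelliptic. The key geometric input --- absent from your plan --- is the existence of an \emph{irreducible} effective $\QQ$-divisor $D\equiv\Theta$ with $\mult_0(D)=2$: take $D=\Theta$ in the hyperelliptic case and $D=\tfrac{1}{2}(C-C)$ otherwise. Irreducibility of its proper transform $\overline{D}\equiv\Theta_2$ forces $\dim\Bstable(\Theta_2)\leq 1$ once $\mu>2$ (the alternative $\overline{D}\subseteq\Bstable(\Theta_2)$ being incompatible with bigness of $\Theta_2$). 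One then pushes forward three general divisors from $\Theta_{\epsilon-\delta}$, $\Theta_2$, $\Theta_{\mu-\delta}$ and applies B\'ezout on $A$: $6\geq 2\mu\epsilon$, hence $\epsilon<\tfrac{3}{2}$, contradicting the Bauer--Szemberg values. No use of Proposition~\ref{prop:debarre}, Corollary~\ref{cor:seshadri}, or $\Sigma_2(\Theta)$ is required.
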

In dimension three the situation is very specific, since any ppav is either the Jacobian of a smooth curve of genus three or is decomposable, see \cite{H63}. So, the proof of Theorem~\ref{thm:abelian3} leads to the following nice consequence:
	\begin{corollary}
		Under the same assumptions, let $D\equiv \Theta$ be an effective $\QQ$-divisor with 
		\[
		\mult_0(D) \ = \ 2 \ .
		\]
		Then one of the two cases take place:
		\begin{enumerate}
			\item If $(A,\Theta)$ is the Jacobian of a hyperelliptic curve, then $D=\Theta$.
			\item 	If $(A,\Theta)\simeq (JC, \Theta_C)$ for a nonhyperelliptic curve $C$, then $D=\Sigma$, where $\Sigma\deq \frac{1}{2}\cdot (C-C)$.
		\end{enumerate}
	\end{corollary}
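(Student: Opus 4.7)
The plan is to combine the bound $\mu(\Theta)\leq 2$ of Theorem~\ref{thm:abelian3} with an intersection-theoretic uniqueness argument on the infinitesimal blow-up, then match the resulting unique extremal divisor against the two cases of the Hoyt--Oort classification of indecomposable $3$-dimensional ppavs as Jacobians $(JC,\Theta_C)$ of smooth genus-$3$ curves.

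First, I would verify that $\Sigma\deq \tfrac12(C-C)$ is a well-defined effective $\QQ$-divisor with $\Sigma\equiv\Theta$ and $\mult_0(\Sigma)=2$. The Pontryagin product together with Poincar\'e's formula $[W_1]=\Theta^2/2$ and $[W_2]=\Theta$ gives $[C-C]=[W_1]*_P[W_1]=2[W_2]=2\Theta$, so $\Sigma$ lies numerically in the class of $\Theta$. For the multiplicity, the difference map $s:C\times C\to JC$, $(p,q)\mapsto p-q$, contracts the diagonal $\Delta$ to the origin, producing $\mult_0(C-C)=2g-2=4$, hence $\mult_0(\Sigma)=2$. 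In the non-hyperelliptic case $s$ is birational and $\Sigma$ is a genuine half-integral divisor distinct from $\Theta$ (which is smooth at the origin since $W^1_2$ is empty). In the hyperelliptic case $s$ is generically 2:1 due to the hyperelliptic involution, and the identification $\Sigma=\Theta$ fits with $\mult_0(\Theta)=2$ from Riemann's singularity theorem applied to the vanishing theta characteristic $\kappa=g^1_2$.

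Second, for uniqueness, let $D\equiv\Theta$ be any effective $\QQ$-divisor with $\mult_0(D)=2$. Lifting to the blow-up $\pi:\overline A\to A$ at the origin, $\overline D\deq\pi^*D-2E$ is effective in the class $\Theta_2=\pi^*\Theta-2E$, which sits on the boundary of the pseudo-effective cone by Theorem~\ref{thm:abelian3}. The crucial intersection is
\[
\Theta_2^3 \ =\ \Theta^3-8\,E^3 \ =\ 6-8 \ =\ -2,
\]
so $\Theta_2$ has strictly negative top self-intersection. I would argue that any two effective $\QQ$-representatives of $\Theta_2$ must coincide as $\QQ$-divisors: intersect their (numerically trivial) difference with the nef translates $\Theta_t$ for $t<2$ and let $t\to 2^-$, invoking Nakamaye's differentiation bound \cite[Proposition~4.4]{Loz18} to control asymptotic multiplicities and rule out cancellations.

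Finally, I would match the unique extremal representative against the candidates. In the hyperelliptic case $\mult_0(\Theta)=2$ makes $\overline\Theta=\pi^*\Theta-2E$ itself an effective representative of $\Theta_2$, so uniqueness gives $\overline D=\overline\Theta$ and hence $D=\Theta$. In the non-hyperelliptic case $\overline\Theta$ is not effective, and the proper transform $\overline\Sigma$ is the only remaining effective representative of $\Theta_2$, forcing $\overline D=\overline\Sigma$ and $D=\Sigma$.

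The main obstacle is the middle uniqueness step. Since Zariski decomposition is not available on threefolds in general, the statement that all effective $\QQ$-representatives of a pseudo-effective boundary class with $\Theta_2^3<0$ coincide requires an argument specifically tailored to this geometry. I expect the proof to blend the exact negative intersection number with Nakamaye-type differentiation inequalities and the tight structure of the exceptional divisor $E\simeq\PP^2$, effectively building a bespoke Zariski-type decomposition for the single class $\Theta_2$ on $\overline A$.
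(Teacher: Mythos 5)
Your first and last steps (identifying the candidates $\Theta$ and $\Sigma=\tfrac12(C-C)$, checking $[C-C]\equiv 2\Theta$ and $\mult_0(C-C)=2g-2$) are fine and consistent with what the paper simply cites from \cite{GG86} and \cite{BL04}. But the entire content of the corollary is the uniqueness statement, and that is exactly where your proposal has a genuine gap. The mechanism you suggest cannot work as described: the difference of two effective $\QQ$-representatives of $\Theta_2$ is numerically trivial, so intersecting it with the nef classes $\Theta_t$ and letting $t\to 2^-$ produces only the identity $0=0$ and cannot "rule out cancellations"; likewise Nakamaye's differentiation bound \cite[Proposition~4.4]{Loz18} constrains the asymptotic multiplicity of the \emph{class} $\Theta_t$ along subvarieties of its base locus, not the difference of two particular representatives. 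Nor does $\Theta_2^3=-2$ by itself force rigidity of the class on the threefold $\overline{A}$ (there is no Zariski decomposition to invoke), as you yourself concede when you ask for a "bespoke Zariski-type decomposition". Finally, the matching step "in the non-hyperelliptic case $\overline{\Sigma}$ is the only remaining effective representative" is circular: it presupposes the classification of effective representatives that is to be proved.

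The paper closes this gap differently, as a by-product of the proof of Theorem~\ref{thm:abelian3}. If $D\equiv\Theta$ is effective with $\mult_0(D)=2$ and differs from the special \emph{irreducible} representative (which is $\Theta$ itself in the hyperelliptic case by \cite[Theorem~2.4]{GG86}, and $\Sigma$ otherwise by \cite[11.2.8]{BL04}), then $\Bstable(\Theta_2)$ is contained in the intersection of the two supports, hence is at most one-dimensional; one can then run the same B\'ezout/proper-intersection argument as in the theorem, now with two divisors of multiplicity $2$ at the origin and one general ample class $\Theta_{\epsilon-\delta}$, giving $6=\Theta^3\geq 2\cdot 2\cdot\epsilon(\Theta)$. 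With the Bauer--Szemberg values $\epsilon(\Theta)=\tfrac{12}{7}$ (non-hyperelliptic) this is an outright contradiction, and with $\epsilon(\Theta)=\tfrac32$ (hyperelliptic) the borderline case is handled via the cited description from \cite{GG86}; see also \cite{BS01}. So the uniqueness comes from the base-locus-plus-B\'ezout mechanism already set up in the paper, not from a rigidity property of the boundary class, and your proposal as written does not supply a substitute for that step.
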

\begin{proof}[Proof of Theorem~\ref{thm:abelian3}]
	As usual the proof uses intersection theory, as seen before, and the behaviour of the Seshadri constant of the theta divisor. For the latter, \cite{BS01} shows that 
	\[
\epsilon\ \deq \ \epsilon(\Theta) \ = \ \begin{cases} 
1, & (A,\Theta)\textup{ is a polarized product.} \\
\frac{3}{2}, &  (A,\Theta) \textup{ is the Jacobian of a hyperelliptic curve}.\\
\frac{12}{7}, &  (A,\Theta) \textup{ is the Jacobian of a non-hyperelliptic curve}.
\end{cases}	
	\]
An abelian three-fold is always the Jacobian of a curve and this description allows us to assume that $(A,\Theta)$ is such for a smooth complex curve. Moreover, there is a special effective $\QQ$-divisor $D\equiv \Theta$ with an irreducible support and $\mult_0(D)=2$. For hyperelliptic curves \cite[Theorem~2.4]{GG86} yields $D=\Theta$ and for non-hyperelliptic ones  \cite[11.2.8]{BL04} provides $D=\Sigma=\frac{1}{2}\cdot (C-C)$.

With this in hand, we prove the statement by contradiction. So, assume $\mu\deq \mu(\Theta)>2$. Let $\overline{D}\equiv \Theta_2$ be the proper-transform through the blow-up of the origin of $D$. Since $\overline{D}$ is irreducible, then either $\overline{D}\subseteq \Bstable(\Theta_2)$ or $\Bstable(\Theta_2)$ is one-dimensional. As $\mu(\Theta)>2$, the first case cannot happen. 

Now, assume that $\Bstable(\Theta_2)$ is one-dimensional and $\mu(\Theta)>2$. Let $0<\delta\ll 1$ and denote by 
\[
D_{\mu-\delta}=\pi_*(\overline{D}_{\mu-\delta}), \ D_2=\pi_*(\overline{D}_{2}), \textup{ and } D_{\epsilon-\delta}=\pi_*(\overline{D}_{\epsilon-\delta}),
\]
 where $\overline{D}_t\equiv\Theta_t$ is a very general choice of an effective divisor. Note that $\overline{D}_2$ moves in a class with a one-dimensional base locus and $\overline{D}_{\epsilon-\delta}$ in an ample one. Thus, $D_{\mu-\delta}, D_2$ and $D_{\epsilon-\delta}$ intersect properly in a zero-dimensional subscheme, and applying B\'ezout's theorem, we get the inequality:
	\[
	6 \ = \ \big(D_{\epsilon-\delta}\cdot D_2\cdot D_{\mu-\delta}\big) \ \geq \ \mult_0(D_{\epsilon-\delta})\cdot \mult_0(D_2)\cdot \mult_0(D_{\mu-\delta}) \ \geq \ 2(\mu-\delta)\cdot (\epsilon-\delta) \ .
	\]
Letting $\delta\rightarrow 0$ and using the assumption that $\mu>2$, this finally implies $\epsilon(\Theta) <1.5$. By the description of the Seshadri constant, given above, we obtain a contradiction.
\end{proof}

\subsection{Infinitesimal width for abelian four-folds.}
We start with a lemma, known to the experts, but with no good reference we include the proof here.
\begin{lemma} \label{lemma:nef}
	Let $A$ be a $g$-dimensional abelian variety and let $D$ be an effective Cartier divisor on $A$ with $D^g=0$. Then there exists a surjective morphism of abelian varieties $f:A \rightarrow A_1$, with non-zero dimensional fibers, and an effective ample divisor $D'$ on $A_1$ such that $D=f^*(D')$.
\end{lemma}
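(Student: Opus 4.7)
The plan is to exhibit $B$ as the connected component of the kernel of the polarisation morphism attached to $L := \sO_A(D)$, to quotient it out, and to descend both $L$ and the divisor $D$ to the quotient. Concretely, Riemann--Roch on an abelian variety gives $\chi(L) = L^g/g! = 0$, so the Mumford homomorphism $\phi_L \colon A \to \widehat{A}$, $a \mapsto t_a^*L \otimes L^{-1}$, fails to be an isogeny; its kernel then has a positive-dimensional connected component $B := K(L)^0$, an abelian subvariety. I set $A_1 := A/B$ and let $f\colon A \to A_1$ be the quotient morphism; these will be the $A_1$ and $f$ of the statement.

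The key step is to show that $L$ literally descends along $f$, i.e.\ $L \cong f^*L_1$ for some line bundle $L_1$ on $A_1$. By the seesaw principle applied to the translation map $m\colon B\times A \to A$, one has $m^*L \otimes p_2^*L^{-1} \cong p_1^*(L|_B)$, so it suffices to establish $L|_B \cong \sO_B$. The inclusion $B \subset K(L)$ yields only the weaker $L|_B \in \Pic^0(B)$, and the effectivity of $D$ is precisely what bridges the gap: for generic $a\in A$ the translate $a+B$ is not contained in $\Supp(D)$, so the defining section of $L$ restricts to a non-zero section of $L|_{a+B}$, forcing $L|_{a+B} \cong \sO_{a+B}$ since a line bundle in $\Pic^0$ admitting a non-zero section is trivial. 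Combining the identification $(t_a^*L)|_B \cong L|_{a+B}$ with $t_a^*L \cong L \otimes \phi_L(a)$ rewrites this as $\phi_L(a)|_B \otimes L|_B \cong \sO_B$ for generic $a$; both sides define morphisms of abelian varieties $A \to \Pic^0(B)$, so equality on a Zariski-dense subset extends to all $a$, and specialising to $a=0$ forces $L|_B \cong \sO_B$.

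Granting the descent, the connected projective group $B$ carries no non-trivial characters, so every global section of $L \cong f^*L_1$ is automatically $B$-invariant and pushes down uniquely to a section of $L_1$. Applied to the section cutting out $D$, this produces an effective divisor $D'$ on $A_1$ with $f^*D' = D$. For ampleness, observe that $\phi_{L_1}$ fits into a commutative diagram with $\phi_L$, $f$, and $\widehat{f}$, and its kernel is finite by the very choice $B = K(L)^0$; hence $L_1$ is non-degenerate. An effective non-degenerate line bundle on an abelian variety has index zero in the sense of the Index Theorem (any other index would force $H^0=0$) and is therefore ample, so $D'$ is ample on $A_1$, completing the argument.

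The main obstacle I anticipate is the upgrade from $L|_B \in \Pic^0(B)$ to the honest isomorphism $L|_B \cong \sO_B$ in the second paragraph, since the naive seesaw argument delivers only the former; the effectivity of $D$, combined with the propagation argument over varying translates $a+B$, is what seals this step.
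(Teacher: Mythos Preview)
Your proof is correct and follows the same global strategy as the paper: set $B=K(L)^0$, show it is positive-dimensional, establish $L|_B\cong\sO_B$, and descend along $f\colon A\to A/B$. The difference lies in how the two middle steps are justified. The paper argues cohomologically: since $\chi(L)=0$ while $H^0(A,L)\neq 0$, some $H^q(A,L)$ with $q>0$ is nonzero, and then \cite[Corollary~3.5.4]{BL04} (a consequence of the analytic Index Theorem) forces $L|_{K(L)_0}$ to be trivial; the positivity of $\dim K(L)_0$ is read off from the signature of the Hermitian form via \cite[Theorem~3.4.5]{BL04}, and the final descent is outsourced to \cite[Lemma~3.3.2]{BL04}. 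You instead obtain $\dim K(L)^0>0$ directly from $\chi(L)^2=\deg\phi_L$, and prove the triviality $L|_B\cong\sO_B$ by a hands-on translation argument that uses only the effectivity of $D$ (generic fibers of $f$ are not contained in $\Supp(D)$, so $L$ restricts to an element of $\Pic^0$ with a section, hence is trivial there, and a rigidity argument propagates this to $B$ itself). Your route is more elementary and self-contained at this step, avoiding the Hermitian-form machinery, though you still invoke the Index Theorem at the end for the ampleness of $D'$; the paper's route is shorter on the page because it cites \cite{BL04} for the heavy lifting.
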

\begin{proof}
Keeping the notation from \cite{BL04}, let $L=\sO_A(D)$ and $H_L$ its Hermitian metric. Since $H^0(A,L)\neq 0$, $L$ is nef and so $H_L$ has no negative eigenvalues. By \cite[Theorem~3.6.3]{BL04}, we have 
	\[
	\chi(A;L)\ = \ \frac{1}{g!}\cdot (D^g) \ = 0 \ .
	\]
   This instead implies that there is a $q>0$ with $H^q(A,L)\neq 0$. By \cite[Corollary~3.5.4]{BL04}, the existence of such a $q$ forces the restriction $L|_{K(L)_0}$ to be trivial. Moreover, by \cite[Theorem~3.4.5]{BL04}, the number of positive eigenvalues $r$ of the Hermitian metric $H_L$ has to be less than $g$. Thus,
	\[
	\textup{dim}(K(L)_0) \ = \ g-r \ > \ 0 \ .
	\]
	Considering the projection map $f:A\rightarrow A_1\deq A/K(L)_0$ and applying \cite[Lemma~3.3.2]{BL04}, we then easily deduce the statement.
	\end{proof}
With this in hand the main goal of this subsection is to prove the following result:
\begin{theorem}\label{thm:abelian4}
	Let $(A,\Theta)$ be an indecomposable principally polarized abelian four-fold. Then 
	\[
	mult_x(D) \ \leq   \ \begin{cases} 
	 3, &  \ (A,\Theta) \ - \ the \ Jacobian \ of \ a \ hyperelliptic \ curve, \\
	 \frac{11}{4}, & \ (A,\Theta) \ - \ otherwise,
	\end{cases}	
	\]
	for any effective $\QQ$-divisor $D\equiv \Theta$ and any $x\in A$.
\end{theorem}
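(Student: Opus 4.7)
My plan is to extend the three-fold argument of Theorem~\ref{thm:abelian3} by inserting additional middle divisors into a four-fold B\'ezout intersection, splitting into cases according to the Seshadri constant of $\Theta$. The key numerical input is Debarre's Conjecture~\ref{conj:main2}, which is known in dimension four by \cite{Loz20}: on an indecomposable four-dimensional ppav either $(A,\Theta)$ is the hyperelliptic Jacobian, in which case the hyperelliptic curve $C$ realizes $\epsilon(\Theta)=(\Theta\cdot C)/\mult_0(C)=4/2=2$, or $\epsilon(\Theta)$ admits a strict lower bound exceeding $2$ coming from the refined low-dimensional Seshadri analysis in \cite{Loz20} together with Corollary~\ref{cor:seshadri}.

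Suppose for contradiction that $\mu:=\mu(\Theta)$ exceeds the claimed threshold. For a small rational $\delta>0$ I would build four effective divisors on the blow-up $\overline{A}$: a very general ample divisor $\overline{D}_1\equiv\Theta_{\epsilon-\delta}$, two middle divisors $\overline{D}_2,\overline{D}_3\equiv\Theta_2$ (whose existence uses $\mu>2$, obtained as general normalized sections of a sufficiently divisible multiple of the class), and a pseudoeffective divisor $\overline{D}_4\equiv\Theta_{\mu-\delta}$ realizing the infinitesimal width. The pushforwards $D_i=\pi_*(\overline{D}_i)$ all lie numerically in the class $\Theta$ on $A$ with multiplicities at the origin at least $\epsilon-\delta$, $2$, $2$, $\mu-\delta$ respectively.

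The central technical step is to verify that $D_1\cap D_2\cap D_3\cap D_4$ is zero-dimensional. Here indecomposability is essential: a positive-dimensional component of $\pi(\Bstable(\Theta_t))$ for the intermediate values of $t$ would produce an effective divisor $D'\equiv\Theta$ whose top self-intersection degenerates along an appropriate sublocus, and by Lemma~\ref{lemma:nef} such a $D'$ must be pulled back from a proper quotient abelian variety, contradicting indecomposability. Combined with the ampleness of $\overline{D}_1$ and the genericity of the choices, this yields proper intersection. B\'ezout on $A$ then gives
\[
24 \;=\; \Theta^4 \;=\; (D_1\cdot D_2\cdot D_3\cdot D_4) \;\geq\; \prod_{i=1}^4 \mult_0(D_i) \;\geq\; 4(\epsilon-\delta)(\mu-\delta),
\]
so in the limit $\mu\leq 6/\epsilon$. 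Plugging $\epsilon=2$ in the hyperelliptic case yields $\mu\leq 3$; the strict lower bound on $\epsilon$ in the non-hyperelliptic case yields $\mu\leq 11/4$; in each case contradicting the hypothesis.

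The hardest step will be the proper-intersection verification, because the class $\Theta_2$ sits on the boundary of the nef cone and may a priori have positive-dimensional components of $\Bstable(\Theta_2)$ on the exceptional divisor $E$ (or outside it); ruling these out, or showing they contribute no excess to the relevant intersection, requires a careful combination of Lemma~\ref{lemma:nef} with the irreducibility and rational-singularity results on $\Theta$ from \cite{EL97}. A secondary obstacle is pinning down the precise Seshadri lower bound in the non-hyperelliptic case needed to reach the sharp value $11/4$, which relies on the explicit small-dimensional computations in \cite{Loz20} rather than on any asymptotic estimate.
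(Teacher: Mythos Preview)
Your outline has two concrete numerical errors that break the argument, and it is missing the key lemma the paper actually uses.

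\textbf{Seshadri constant in the hyperelliptic case.} The value $\epsilon(\Theta)=2$ for a genus~$4$ hyperelliptic Jacobian is wrong. Debarre \cite[Section~4]{D04} computes $\epsilon(\Theta)=\tfrac{8}{5}$ in this case (and the curve $C$ through the origin is smooth there, so your ratio $4/2$ is not the right computation). With the correct value your inequality $\mu\leq 6/\epsilon$ gives only $\mu\leq \tfrac{15}{4}$, not $3$. Worse, since $\epsilon<2$ the class $\Theta_2$ is no longer nef, so two general divisors $\overline{D}_2,\overline{D}_3\equiv\Theta_2$ share a common two-dimensional base locus (in fact $\Bstable(\Theta_2)\subseteq \overline{S}_C$ with $S_C=C-C$), and the zero-dimensionality of $D_1\cap\cdots\cap D_4$ fails outright. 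The paper handles this case by computing the intersection on $\overline{A}$ rather than on $A$, using the explicit geometry of $\overline{S}_C$ to control the relevant surface classes.

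\textbf{Non-hyperelliptic case.} Here \cite{D04} gives only $\epsilon(\Theta)\geq 2$, and your inequality $\mu\leq 6/\epsilon$ yields $\mu\leq 3$, not $\tfrac{11}{4}$. To reach $\tfrac{11}{4}$ you would need $\epsilon\geq \tfrac{24}{11}$, which is not available; Corollary~\ref{cor:seshadri} requires $g\geq 6$ and does not apply. The paper does not improve the Seshadri bound. Instead it introduces
\[
t_3 \ \deq \ \min\big\{t>0 \;\big|\; \dim\big(\pi(\Bstable(\Theta_t))\big)\geq 3\big\},
\]
and replaces one of your $\Theta_2$-divisors by a divisor in $\Theta_{t_3-\delta}$. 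The crucial new input is Lemma~\ref{lem:intersection}, a lengthy arithmetic case analysis showing that any irreducible divisorial component $\overline{R}$ of $\Bstable(\Theta_t)$ satisfies $\mult_{\overline{R}}(\|\Theta_t\|)\leq\tfrac12$; combined with differentiation this forces $\mu-t_3\leq\tfrac12$. B\'ezout then gives $24\geq 4\,t_3\,\mu\geq 4\mu(\mu-\tfrac12)$, which fails at $\mu=\tfrac{11}{4}$. Your appeal to Lemma~\ref{lemma:nef} for proper intersection is far too coarse: that lemma only tells you an effective divisor with $D^4=0$ is a pullback, and does not by itself bound the dimension of $\Bstable(\Theta_t)$ or the multiplicity along a divisorial component. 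The content of Lemma~\ref{lem:intersection} --- a case split on $\dim A_1$ with repeated use of Hodge index and the divisibility constraints of Remark~\ref{rem:intersection} --- is exactly what is missing from your sketch.
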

First we need a technical lemma on the behaviour of codimension one components of the base loci of classes $\Theta_t$ on the blow-up $\overline{A}$. 
\begin{lemma}\label{lem:intersection}
	Let $(A,\Theta)$ be a $4$-dimensional ppav. Let $\overline{R}\geq 0$ be an effective irreducible divisor on $\overline{A}$ so that $\overline{R}\subseteq \Bstable(\Theta_{t_0})$ for some $t_0\in (0,\mu(\Theta))$. Then
	\[
    \mult_{\overline{R}}(||\Theta_t||) \ \leq \ \frac{1}{2}
	\]
	for any $t<\mu(\Theta)$.
	\end{lemma}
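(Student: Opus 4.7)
My plan is to pass to the pushforward on $A$, use the log\nobreakdash-canonicity result of \cite[Proposition~3.5]{EL97}, and combine it with intersection theory on $\overline{A}$.

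First, I rule out $\overline{R}=E$. Since $t_0<\mu(\Theta)$, the class $\Theta_{t_0}$ admits effective $\QQ$\nobreakdash-representatives with zero $E$\nobreakdash-coefficient (take sections of $|m\Theta|$ with $\mult_0$ exactly $\lceil mt_0\rceil$, pull back, and subtract $mt_0\,E$), so $E\not\subseteq\Bstable(\Theta_{t_0})$. Hence $\overline{R}$ is the proper transform of an irreducible divisor $R\subseteq A$; set $e=\mult_0(R)$, so $\overline{R}\equiv\pi^*R-eE$.

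Fix $t\in(0,\mu(\Theta))$ and $\delta>0$ small, let $\nu=\nu(t):=\mult_{\overline{R}}(\|\Theta_t\|)$, and pick an effective $\QQ$\nobreakdash-divisor $D\equiv\Theta_t$ with $\mult_{\overline{R}}(D)\leq\nu+\delta$. Write $D=\nu\overline{R}+F$ with $F$ effective and $\overline{R}\not\subseteq\Supp(F)$. The pushforward $D'=\pi_*D$ is effective, $D'\equiv\Theta$ numerically, with $\mult_0(D')\geq t$ and $\mult_R(D')\geq\nu$. By \cite[Proposition~3.5]{EL97} the pair $(A,D')$ is log\nobreakdash-canonical, which already gives the weaker bound $\nu\leq\codim(R)=1$; the point is to sharpen it to $1/2$.

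For the sharpening, I would intersect $D$ with $\overline{R}\cdot\Theta_s^{2}$ for $s=\epsilon(\Theta)$, so that $\Theta_s$ is nef. Routine computation using $(\pi^*\Theta)^4=24$, $E^4=-1$, and vanishing of mixed intersections gives $\overline{R}^{\,2}\cdot\Theta_s^{2}=R^2\Theta^2-e^2s^2$ and $\overline{R}\cdot\Theta_t\cdot\Theta_s^{2}=R\Theta^3-tes^2$. Together with the positivity $F\cdot\overline{R}\cdot\Theta_s^{2}\geq 0$ (since $F|_{\overline{R}}$ is an effective divisor on $\overline{R}$ and $\Theta_s|_{\overline{R}}$ is nef) this yields
\[
\nu\,(R^2\Theta^2-e^2s^2)\ \leq\ R\Theta^3-tes^2.
\]
The main obstacle is then closing the argument via a case split on the class of $R$: if $R\not\equiv\Theta$ numerically, then a Hodge\nobreakdash-index comparison on the surface $\overline{R}$ combined with the inequality above should already force $\nu\leq 1/2$; if $R\equiv\Theta$, then by the rigidity $h^0(\Theta)=1$ the divisor $R$ must be a translate $\Theta+x$, so $e=\mult_{-x}(\Theta)\leq g-2=2$ by Theorem~\ref{thm:main1}, and feeding this back into the inequality together with $\epsilon(\Theta)\geq 1$ should yield the desired bound.
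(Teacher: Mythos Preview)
Your proposal has a genuine gap: the core inequality you derive is correct, but it is far too weak to yield $\nu\leq\tfrac12$ in either branch of your dichotomy, and the closing arguments you sketch do not work.

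In the case $R\equiv\Theta$ you have $R^2\Theta^2=R\Theta^3=24$, and your inequality becomes $\nu(24-e^2s^2)\leq 24-tes^2$. Plugging in $e\leq 2$ and $s\geq 1$ gives nothing close to $\tfrac12$: for instance $e=0$ yields only $\nu\leq 1$, and $e=2$, $s=1$ yields $20\nu\leq 24-2t$, which is useless for small $t$. So ``feeding this back'' does not produce the bound. In the case $R\not\equiv\Theta$ you invoke a ``Hodge-index comparison on the surface $\overline{R}$'', but $\overline{R}$ is a threefold, and you give no indication of what comparison would force $\nu\leq\tfrac12$; without further structural information on the numbers $R^2\Theta^2$ and $R\Theta^3$, your inequality again cannot do the job. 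The log-canonicity input from \cite{EL97} only recovers $\nu\leq 1$, as you note.

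The paper's argument is of a completely different nature and substantially more delicate. Assuming $\nu>\tfrac12$, one first observes that $M:=2\Theta-R$ is \emph{ample} (since $\Theta-\nu R$ is effective, hence nef on an abelian variety, and $2-\tfrac{1}{\nu}>0$). The case $R$ ample is then ruled out by arithmetic: divisibility of $M^4$ and $R^4$ by $24$ together with Khovanskii--Teissier forces $R\equiv\Theta$, but then $\overline{R}\subseteq\Bstable(\Theta_{t_0})$ combined with $\mult_0(R)\leq 2$ contradicts $\mu(\Theta)\geq\sqrt[4]{24}>2$. Thus $R$ is nef but not ample, so $R^4=0$, and by Lemma~\ref{lemma:nef} one has $R=f^*(D')$ for a surjection $f\colon A\to A_1$ onto a lower-dimensional abelian variety. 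The proof then splits on $\dim A_1\in\{1,2,3\}$ and uses the divisibility constraints of Remark~\ref{rem:intersection}, Hodge-index inequalities, and \cite[Lemma~1]{DH07} to eliminate every possible configuration of the numbers $(\Theta^i\cdot R^{4-i})$. None of this structure --- the ampleness of $2\Theta-R$, the pull-back description of $R$, or the arithmetic case analysis --- is present in your outline.
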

\begin{remark}\label{rem:intersection}
	Let $D$ and $F$ be nef Cartier divisors on $A$. Then \cite[Theorem~3.6.3]{BL04} yields that both $D^4$ and $F^4$ are divisible by $24$. Due to this and the binomial extension
	\[
	(D+F)^4 \ = \ D^4+F^4\ + \ 4\big((D^3\cdot F)+(D\cdot F^3)\big)+6(D^2\cdot F^2) \ ,
	\]
	$(D^3\cdot F)+(D\cdot F^3)$ is always divisible by $3$ and $(D^2\cdot F^2)$ is even. Moreover, when $(D^2\cdot F^2)$ is divisible by $4$, then $(D^3\cdot F)+(D\cdot F^3)$ is divisible by $6$.
	\end{remark}
	\begin{proof}
Assume for a fixed $t<\mu(\Theta)$ the opposite inequality hold and the goal is to get a contradiction. Denoting $R\deq \pi_*(\overline{R})$, this yields that $M\deq 2\Theta-R$ is an ample class. 
		
Our first goal is to understand the possible geometry of $R$. More specifically, we want to show
			\begin{equation}\label{eq:three}
			 R \ - \ \textup{ is nef but not ample}; \textup{ and either } \mult_0(R)\geq 2 \textup{ or } R\textup{ is abelian}.
			\end{equation}
Assume first $R$ is ample. Then $24$ divides both $R^4$ and $M^4$. Consequently,   $(M^i\cdot R^{4-i})\geq 24$ by \cite[Theorem~1.6.1]{PAG}. As $M+R=2\Theta$ and $\Theta^4= 24$, the binomial extension implies that $M^4=(M^3\cdot R)=24$ and \cite[Proposition~3]{K66}, forces $M=R=\Theta$. So, $R$ is an irreducible theta divisor on $A$. As Conjecture~\ref{conj:main} holds in dimension four, then $\mult_0(R)\leq 2$. Moreover, $\overline{R}\subseteq \Bstable(\Theta_{t_0})$, and so $\mu(\Theta)\leq 2$. But this is not possible, as \cite[Proposition~1.3.1]{PAG} yields  
\[
		\mu(\Theta) \ \geq  \ \sqrt[4]{\Theta^4}\ = \ \sqrt[4]{24} \ >\ 2 .
		\]
		So, if such an $R$ exists then it must be nef and not ample.
			
	   Assume $\textup{mult}_0(R)=1$. If $R$ is not abelian, then $R_x\deq R-x\neq R$ as effective divisors for any very general point $x\in \textup{Supp}(R)$. Translating this to $\overline{A}$, then $\overline{R}_x\equiv _{\textup{num}}\overline{R}$ with distinct supports, where $\overline{R}_x$ is the proper transform of $R_x$. Since $\overline{R}\subseteq \Bstable(\Theta_{t})$, then  $\overline{R}_x\subseteq \Bstable(\Theta_{t})$, as we can consider these base loci to be numerical invariants by \cite[Lemma~2.3]{Loz18}. Moving $x$ around, these  data forces $\Bstable(\Theta_{t})=X$, which is not possible as $t<\mu(\Theta;0)$. So, $R$ must be abelian, implying that the remaining cases left to tackle are those from $(\ref{eq:three})$.
			
	   The second idea is to show the following upper bounds
			\begin{equation}\label{eq:four}
			23 \ \geq \ \textup{max}\{(\Theta^i\cdot R^{4-i})\ |\ i=1,2,3\}.
			\end{equation}
			We deal first with the case $i=3$. As $M$ is ample, $24$ divides $M^4$ and \cite[Corollary~1.6.3]{PAG} yields $M^j\cdot \Theta^{4-j}\geq 24$. As $\Theta^4=24$ and $M+R=2\Theta$, we automatically get $24\geq (\Theta^3\cdot R)$. Equality does not hold, as otherwise $(\Theta ^4)=(\Theta^3\cdot M)=24$ and then $M\equiv R\equiv \Theta$ by \cite[Proposition~3]{K66}, which contradicts $(\ref{eq:three})$ as $R$ is not ample.
			
			For the cases $i=1,2$, \cite[Example~1.6.4]{PAG} yields the following string of inequalities
			\[
				\big(\Theta^3\cdot R\big)^4 \ \geq \ \big(\Theta^4\big)^2\cdot \big(\Theta^2\cdot R^2\big)^2\ \geq \ 24^2\cdot \big(\Theta^3\cdot R\big)\cdot \big(\Theta\cdot R^3\big) \ . 
			\]
          Combining this with the case $i=3$ clearly imply $(\ref{eq:four})$ for the other two cases.
			
			Going forward, as $R$ is effective but not ample, then $(R^4)=0$ by  \cite[Corollary~1.5.18]{PAG}. In particular,  Lemma~\ref{lemma:nef} yields the existence of a surjective morphism $f:A\rightarrow A_1$ of abelian varieties with non-zero dimensional fibers and an ample divisor $D$ on $A_1$ such that $R=f^*(D)$. With this in hand, we divide the rest of the proof in three cases, based on the dimension of $A_1$.
			
			\fbox{\textit{Case} 1:} $\textup{dim}(A_1)=1$, i.e. $R$ is an abelian three-fold.
			
Applying \cite[Lemma~1]{DH07}, and $(\ref{eq:four})$ we deduce that
 $(\Theta^3\cdot R)=12, 18$. Since $R$ is abelian, rigidity lemma implies the vanishing of $(\Theta^2\cdot R^2), (\Theta\cdot R^3)$, and $R^4$. As $M=2\Theta-R$, this forces $M^4\leq 0$, contradicting the amplitude of $M$.  
			
			\fbox{\textit{Case} 2:} $\textup{dim}(A_1)=2$.
			
			Since $D$ is an ample line bundle on an abelian surface then $D^2=2k$ for some $k\geq 1$. Let's first discount $D^2=2$. As $R$ is not abelian, $\textup{mult}_0(R)\geq 2$ by $(\ref{eq:four})$ and also $\mult_0(D)\geq 2$. On the surface $A_1$, this data forces $\epsilon(D)\leq 1$. So by \cite{N96}, there is an elliptic curve $F\subseteq A_1$ with $(D\cdot F)=1$. And here lies the contradiction, as $D$ and $F$ have different support, B\'ezout's theorem yields
			\[
			1\ = \ (D\cdot F)\ \geq \ \mult_0(D)\cdot \mult_0(F) \geq 2 \ .
			\] 
Now, suppose $D^2= 2k$ for some $k\geq 2$. Consider $S=f^{-1}(0)\subseteq A$ the abelian surface. Then \cite[Lemma~1]{DH07} and asymptotic Riemann-Roch yield $(\Theta^2\cdot S)= 2l$ for some $l\geq 2$. Applying $(\ref{eq:four})$, then the data above forces $(\Theta^2\cdot R^2)=16$. Finally, Hodge index yields
			\[
			(\Theta^3\cdot R)^2 \ \geq (\Theta^2\cdot R^2)\cdot (\Theta^4) \ = \ 24\cdot 16  \ .
			\]
			Together with $(\ref{eq:four})$, this implies that the only cases remaining to deal with is when 
			\[
			(\Theta^3\cdot R)\ =\ 20,21,22,23 \ .
			\]
	Using additionally that $(\Theta\cdot R^3)=0$, we are lead to a contradiction in any of these case, because of the arithmetic nature of intersection numbers on $A$, as explained in Remark~\ref{rem:intersection}.
			
			\fbox{\textit{Case} 3:} $\textup{dim}(A_1)=3$.
			
			First, note that $6$ divides $D^3$, as $D$ is ample on $A_1$. Second, the map $f:A\rightarrow A_1$ has elliptic curves as fibers, whose intersection numbers with $\Theta$ are at least two by \cite[Lemma~1]{DH07}. In particular, these yield $(\Theta\cdot R^3)=6k$, for some $k\geq 2$. On the other hand, applying $(\ref{eq:four})$, we easily deduce that the only cases left to tackle are when  $(\Theta\cdot R^3)=12,18$. 
			
Consider first the case $(\Theta\cdot R^3)=18$. By \cite[Example~1.6.4]{PAG}, we have the inequalities
			\[
						\big(\Theta^3\cdot R\big)^4 \ \geq \ \big(\Theta^4\big)^2\cdot \big(\Theta^2\cdot R^2\big)^2\ \geq \ 24^2\cdot \big(\Theta^3\cdot R\big)\cdot \big(\Theta\cdot R^3\big) \ .
			\]
Consequently, one must have $(\Theta^3\cdot R)\geq 22$. Since $3$ divides $(\Theta\cdot R^3)$, then the same holds for $(\Theta^3\cdot R)$ by Remark~\ref{rem:intersection}. Combining this with $(\ref{eq:four})$ leads us to a contradiction in this case.
			
We are left to tackle the case $(\Theta\cdot R^3)=12$. The same reasoning as in the previous case forces $(\Theta^3\cdot R)=21$. Moving forward,  Hodge index yields the following list of inequalities
			\[
			21^4 \ = \ (\Theta^3\cdot R)^4\ \geq \ (\Theta^4)^2\cdot (\Theta^2\cdot R^2)^2 \ \geq \ 24^2\cdot (\Theta^3\cdot R)\cdot (\Theta\cdot R^3) \ = \ 24^2\cdot 21\cdot 12 \ ,
			\]
providing an upper and a lower bound on $(\Theta^2\cdot R^2)$. Together with Remark~\ref{rem:intersection}, they imply that the only case left is when $(\Theta^2\cdot R^2)=18$. 

As $R^4=0$, we know all the intersection numbers between $\Theta$ and $R$. In particular, as $M=2\Theta-R$ is ample, we can deduce easily the same data between $M$ and $\Theta$
			\[
			(\Theta^3\cdot M)=27, (\Theta^2\cdot M^2)=30,  (\Theta\cdot M^3)=36 \textup{ and } M^4=48 \  .
			\]
			Finally, applying \cite[Example~1.6.4]{PAG}, we get then the inequality
			\[
			900\ = \ (\Theta^2\cdot M^2)^2 \ \geq \ (\Theta^3\cdot M)\cdot (\Theta\cdot M^3)\ = \ 27\cdot 36\ = \ 972 \ ,
			\]
			leading to a contradiction in this third case. This finishes the proof of the lemma.
			
	\end{proof}

\begin{proof}[Proof of Theorem~\ref{thm:abelian4}] 
	We start the proof by introducing some notation. Denote by
	\[
	\epsilon \ \deq \ \epsilon(\Theta)\ \leq \ 	t_3\ \deq \ \textup{min}\{t>0\  | \ \textup{dim}\Big(\pi\big(\Bstable(\Theta_t)\big)\Big)\ \geq  \ 3 \} \ \leq \ \mu\deq  \mu(\Theta).
	\]
	We also consider $\overline{H}\neq E$ to be any three-dimensional irreducible hypersurface, that appears in the base locus $\Bstable(\Theta_t)$ for some $t>t_3$. Applying \cite[Proposition~4.4]{Loz18} in this setup we then get 
	\[
	\mult_{\overline{H}}(||\Theta_t||) \ \geq \ t-t_3 \ , \forall \ t\geq t_3 \ .
	\]
	Applying this together with Lemma~\ref{lem:intersection} to each such $\overline{H}$ yields the following inequality:
	\begin{equation}\label{eq:boundmu}
	\mu(\Theta)\ - \ t_3 \ \leq \ \frac{1}{2} \ . 
	\end{equation}
	With this inequality in hand, we turn our attention to the description of the Seshadri constants of four-dimensional indecomposable ppav given in \cite[Section~4]{D04}. In particular, we have
		\[
	 \epsilon(\Theta) \ = \ \begin{cases} 
	\frac{8}{5}, &  (A,\Theta)\ - \ \textup{ the Jacobian of a hyperelliptic curve,} \\
	\geq 2, &  (A,\Theta)\ - \  \textup{ otherwise}.
	\end{cases}	
	\]
	Based on this description we then divide the proof in two cases.
	
	\fbox{\textit{Case} 1:} ``$\epsilon(\Theta;0)\geq 2\ \Longrightarrow \ \mu(\Theta;0)\leq \frac{11}{4}$''.
	
    Under these circumstances we do the usual trick, use intersection theory and B\'ezout's theorem on $A$. For this fix a positive number $0<\delta\ll 1$ and consider very general choices of divisors 
	\[
	\overline{D}_1,\overline{D}_2 \ \equiv \ \Theta_{2-\delta}, \overline{D}_3\ \equiv \ \Theta_{t_3-\delta}, \textup{ and } \overline{D}_4 \ \equiv \ \Theta_{\mu-\delta} \ .
	\]
	The first two divisors sit in an ample class, as $\epsilon(\Theta;0)\geq 2$. The divisor $\overline{D}_3$ lives in a big class with a base locus of dimension at most two, even those contained in thee exceptional divisor $E\simeq\PP^2$, and $\overline{D}_4$  of dimension at most three. Setting $D_i=\pi_*(\overline{D}_i)$, then the intersection of all the divisors $D_i$ is zero-dimensional. Hence, B\'ezout's theorem yields the following inequality:
	\[
	24 \ =\ (\Theta^4) \ = \ (D_1\cdot D_2\cdot D_3\cdot D_4)\ \geq \ \prod_{i=1}^{i=4}\textup{mult}_0(D_i)\ = \ (2-\delta)^2\cdot (t_3-\delta)\cdot(\mu-\delta) \ .
	\]
Taking $\delta\rightarrow 0$, and considering $(\ref{eq:boundmu})$, we are lead to the following one
\[
24 \ \geq \ 4\mu(\mu-0.5) \ .
\]
But this does not hold for $\mu\geq  2.75$, and finishes the proof in this case.

 	\fbox{\textit{Case} 2:} ``$\epsilon(\theta;0)< 2\ \Longrightarrow \ \mu(\Theta;0)\leq 3$''.

In this case the idea is to do the intersection theory part on $\overline{A}$. This could complicate matters, because of the irreducible components appearing in our base loci, contained in $E$, but at least in dimension four this seems not to be the case. 

Based on the description of the Seshadri constant above we know that in this case the pair $(A,\Theta)$ is the Jacobian of a hyperelliptic curve $C$ and $\epsilon(\Theta)=\frac{8}{5}$. So, fixing a rational number $0<\delta\ll 1$, we consider the following intersection number
\[
 (\Theta_{\frac{8}{5}-\delta}\cdot \Theta_{2-\delta}\cdot \Theta_{t_3-\delta}\cdot \Theta_{\mu-\delta}) \ = \ 24-(\frac{8}{5}-\delta)(2-\delta)(t_3-\delta)(\mu-\delta) \ .
\]
The idea is to show geometrically that this intersection number is positive. If we do so, then taking $\delta\rightarrow 0$ and using $(\ref{eq:boundmu})$, will force automatically $\mu(\Theta)\leq 3$ and finish the proof. 

In order to understand the intersection number above, note that a very general choice of divisor in $\Theta_{t_3-\delta}$ and one in $\Theta_{\mu-\delta}$ would intersect properly in an effective cycle of codimension two. This follows from the definition of $t_3$ and the fact that any irreducible components of $\Bstable(\Theta_{t_3-\delta})$ cannot be contained in $E$ and be three-dimensional.

As a consequence of these ideas, we can then prove that the intersection number above is positive, as long as we are able to show 
\begin{equation}\label{eq:surface}
(\Theta_{\frac{8}{5}-\delta}\cdot \Theta_{2-\delta}\cdot \overline{S}) \ \geq \ 0 \ ,
\end{equation}
for any surface $\overline{S}\subseteq \overline{A}$.

In order to do so, we will take advantage of some classical aspects of the geometry of the Jacobian of a hyperelliptic curve. First, the curve $C$ is canonically embedded in $A$, and we can define the surface $S_C=C-C$. Applying simultaneously \cite[Theorem~2.4]{GG86} and \cite[Theorem~1]{BD88} to our infinitesimal setup on the blow-up $\overline{A}$, yields the inclusion
\[
\Bstable(\Theta_2) \ \subseteq \ \overline{S}_C \ \subseteq \ \overline{A} \ ,
\]
where $\overline{S}_C$ is the proper transform of $S_C$. 

With this in hand, we proceed to prove $(\ref{eq:surface})$. If $\overline{S}=\overline{S}_C$, then \cite[p. 79]{F84} yields
\[
\mult_0(S_C)\ = \ -(\overline{S}_C\cdot (-E)^2) \ .
\]
This equality, projection formula for intersection numbers, and the facts that $(\Theta^2\cdot S_C)=12$ and $\mult_0(S_C)=3$, imply easily $(\ref{eq:surface})$ in this case.

If $\overline{S}\neq \overline{S}_C$, then $\overline{S}\nsubseteq \Bstable(\Theta_{2-\delta})$ by above. Thus a general choice of a divisor in $\Theta_{2-\delta}$ will intersect $\overline{S}$ in a one-dimensional effective cycle on $\overline{A}$. As the class $\Theta_{\frac{8}{5}-\delta}$ is ample, then the desired positivity statement in $(\ref{eq:surface})$ follows and we finish the proof.
	\end{proof}

\end{document}